\def\tank#1{\protected@xdef\@thanks{\@thanks
 \protect\footnotetext[0]{#1}}}
\def\bigfoot{

 \@footnotetext}
\newcommand{\ea}{\end{array}}
\newtheorem{theorem}{Theorem}[section]
\newtheorem{lem}{Lemma}[section]
\newtheorem{prop}[theorem]{Proposition}
\newtheorem{thm}[theorem]{Theorem}
\newtheorem{cor}[theorem]{Corollary}
\newtheorem{rem}{Remark}[section]
\def\beq{\begin{equation}}
\def\nneq{\end{equation}}
\def\bthm{\begin{thm}}
\def\nthm{\end{thm}}
\def\blem{\begin{lem}}
\def\nlem{\end{lem}}
\def\bprf{\begin{proof}}
\def\nprf{\end{proof}}
\def\bprop{\begin{prop}}
\def\nprop{\end{prop}}
\def\brmk{\begin{rem}}
\def\nrmk{\end{rem}}
\def\<{\left<}\def\>{\right>}\def\({\left(}\def\){\right)}
\def\bexa{\begin{exa}}
\def\nexa{\end{exa}}
\def\bcor{\begin{cor}}
\def\ncor{\end{cor}}
\newcommand{\rr}{\mathbb{R}}
\def\e{\varepsilon}
\title{A Moderate Deviation Principle for 2-D Stochastic Navier-Stokes Equations Driven by
Multiplicative L\'evy Noises}
\thanks{jiexiong@umac.mo. JX's research is supported by Macao Science and Technology Fund FDCT
076/2012/A3 and Multi-Year Research Grants of the University of Macau No.
MYRG2014-00015-FST and MYRG2014-00034-FST.}, Jianliang Zhai
\thanks{zhaijl@ustc.edu.cn},\ \
\thanks{Tusheng.Zhang@manchester.ac.uk}\\
\date{}
\newenvironment{proof}{\par\noindent{\bf Proof:}}{\hspace*{\fill}$\blacksquare$\par}
\begin{document}
\maketitle

\begin{minipage}{140mm}
\begin{center}
{\bf Abstract}
\end{center}
In this paper, we establish a moderate deviation principle for two-dimensional stochastic Navier-Stokes equations driven by multiplicative $L\acute{e}vy$ noises. The weak convergence method introduced by Budhiraja, Dupuis and Ganguly in \cite{Budhiraja-Dupuis-Ganguly}  plays a key role.
\end{minipage}

\vspace{4mm}

\noindent \textbf{AMS Subject Classification}: Primary 60H15 Secondary 35R60, 37L55.

\vspace{3mm}
\noindent \textbf{Key Words:} Moderate deviation principles; Stochastic Navier-Stokes equations; Poisson random measures; Skorokhod representation; Tightness.

\vspace{4mm}

\section{Introduction}

Consider the two-dimensional Navier-Stokes equation with Dirichlet boundary condition, which  describes the time evolution of an incompressible fluid,

\begin{equation}\label{PDE}
 \frac{\partial u(t,x)}{\partial t}-\nu\Delta u(t,x)+(u(t,x)\cdot\nabla)u(t,x)+\nabla p(t,x)
=
  f(t,x),
\end{equation}
with the conditions
\begin{equation}\label{conditions}
    \begin{cases}  (\nabla\cdot u)(t,x)=0, \ \ \  \text{for}\ \ x\in D,\ \  t>0,  \\
    u(t,x)=0,  \ \ \ \ \qquad  \text{for}\ \ x\in \partial D, t\geq0,\\
    u(0,x)=u_0(x),  \ \ \ \ \text{for}\ \ x\in D,
                        \end{cases} \end{equation}
where $D$ is a bounded open domain of $\rr^2$ with regular boundary $\partial D$, $u(t,x)\in\rr^2$ denotes the velocity field at time $t$ and position $x$,
$\nu>0$ is the viscosity coefficient, $p(t,x)$ denotes the pressure field, $f$ is a deterministic external force.

To formulate the Navier-Stokes equation, we introduce the following standard spaces: let
$$
V=\left\{v\in H_0^1(D;\rr^2):\nabla\cdot v=0, \text{a.e. in } D\right\},
$$
with the norm
$$
\|v\|_V:=\left(\int_D|\nabla v|^2dx\right)^{\frac12}=\|v\|,
$$ and let $H$ be the closure of $V$ in the $L^2$-norm
$$
|v|_H:=\left(\int_D|v|^2dx\right)^{\frac12}=|v|.
$$

 Define the operator $A$ (Stokes operator) in $H$ by the formula
 $$
 Au:=-  \nu P_H \Delta u,\ \ \ \ \forall u\in H^2(D;\rr^2)\cap V,
 $$
where the linear operator $P_H$ (Helmhotz-Hodge projection) is the projection operator from $L^2(D;\rr^2)$ to $H$,
and define the nonlinear operator $B$ by
$$
B(u,v):=P_H((u\cdot\nabla)v),
$$
with the notation $B(u):=B(u,u)$ for short.

By applying the operator $P_H$ to each term of \eqref{PDE}, we can rewrite it in the following abstract form:
\beq\label{SNSE}
du(t)+Au(t)dt+B(u(t))dt=f(t)dt \ \ \ \text{    in } L^2([0,T],V'),
\nneq
with the initial condition $u(0)=u_0$ for some fixed point $u_0$ in $H$.
\vskip0.3cm

Taking into account the random external forces, in this paper we consider stochastic Navier-Stokes equations (SNSE) driven by the
multiplicative $L\acute{e}vy$ noise, that is, the following random perturbations of Navier-Stokes equation:
\begin{eqnarray}\label{SNSE-01}
\left\{
 \begin{array}{lll}
 & \hbox{ $du^\epsilon(t)
 =
 -Au^\epsilon(t)dt-B(u^\epsilon(t))dt+f(t)dt+\epsilon\int_\mathbb{X}G(u^\epsilon(t-),v)\widetilde{N}^{\epsilon^{-1}}(dtdv)$;} \\
 & \hbox{$\ \ u^\epsilon(0)=u_0\in H$.}
 \end{array}
\right.
\end{eqnarray}
Here $\mathbb{X}$ is a locally compact Polish space, $G$ is a measurable mapping to be specified later, $N^{\epsilon^{-1}}$ is a Poisson random measure on $[0,T]\times\mathbb{X}$ with a
$\sigma$-finite
intensity measure $\epsilon^{-1}\lambda_T\otimes \vartheta$, $\lambda_T$ is the Lebesgue measure on $[0,T]$ and $\vartheta$ is a $\sigma$-finite measure on $\mathbb{X}$, $\widetilde{N}^{\epsilon^{-1}}$ is the compensated Poisson random measure, i.e., for $O\in \mathcal{B}(\mathbb{X})$ with $\vartheta(O)<\infty$,
\[\widetilde{N}^{\epsilon^{-1}}([0,t]\times O)=N^{\epsilon^{-1}}([0,t]\times O)-\epsilon^{-1}t\vartheta(O).\]

\vskip 0.3cm

As the parameter $\e$ tends to zero, the solution $u^\e$ of
\eqref{SNSE-01} will tend to the solution of the following
deterministic Navier-Stokes equation at least in the mean sense
\beq\label{NSE} du^0(t)+Au^0(t)dt+B(u^0(t))dt=f(t)dt,\ \ \
\text{with}\ u^0(0)=u_0\in H. \nneq \vskip 0.3cm
  In this paper, we shall investigate deviations of $u^\e $ from the deterministic solution $u^0$, as $\e$ decreases to $0$,
that is,  the asymptotic behavior of the trajectory,
\begin{eqnarray}\label{eq Y}
Y^\e=\left(u^\e -u^0\right)/{a(\e)},
\end{eqnarray}
where $a(\e)$ is some deviation scale which strongly influences the asymptotic behavior of $Y^\e$.
We will study the so-called moderate deviation principle (MDP for short, cf. \cite{Dembo-Zeitouni}), that is when the deviation scale satisfies
\beq \label{h}
 a(\e)\to0,\ \ \e/{a^2(\e)}\to0\ \ \ \text{as}\ \e\to0.
 \nneq
Throughout this paper, we assume that \eqref{h} is in place.

 \vskip 0.3cm
Large deviations for  stochastic partial differential equations   have been
investigated in many papers, see \cite{CW}, \cite{CR},  \cite{KX}, \cite{S}, etc..
Wentzell-Freidlin type
large deviation results for the two-dimensional stochastic Navier-Stokes equations
with Gaussian noise have been established in \cite{AX} and \cite{Srith-Sundar}, and the case of   L\'evy noise has been established in \cite{Xu-Zhang} and \cite{Zhai-Zhang}.
\vskip 0.3cm

 Like the  large deviations, the moderate deviation problems arise in  the theory of statistical  inference quite naturally.
 The estimates of moderate deviations can provide us with the rate
  of convergence and a useful method for constructing asymptotic confidence intervals, see \cite{Erm}, \cite{Gao-Zhao11}, \cite{IK}, \cite{Kal}  and the references therein.
Results on the MDP for processes with independent increments were obtained in
De Acosta \cite{DeA}, Chen \cite{Ch} and Ledoux \cite{Led}. The study of
the MDP estimates for other processes has been carried out as well, e.g., Wu \cite{Wu} for Markov processes,  Guillin  and Liptser \cite{GL} for diffusion processes,  Wang and Zhang \cite{WZ} for stochastic reaction-diffusion equations. Wang $et\; al$ \cite{WZZ} considered a MDP for 2-D stochastic Navier-Stokes equations driven by multiplicative Wiener processes.

The moderate deviation problems for stochastic evolution equations and stochastic
 partial differential equations driven by $L\acute{e}vy$ noise are drastically different because of the appearance of the jumps.
  There is not much  study on this topic so far.
Recently, Budhiraja $et\; al$ \cite{Budhiraja-Dupuis-Ganguly} obtained the MDPs
for stochastic differential equations driven by a Poisson random measure in finite dimensions and
in some co-nuclear spaces, which can not cover SNSEs.
\vskip 0.4cm

Our aim is to establish a moderate deviation principle for the
two-dimensional stochastic Navier-Stokes equations (SNSEs) driven by
multiplicative $L\acute{e}vy$ noises. We will apply the abstract
criteria (weak convergence approach) obtained in
\cite{Budhiraja-Dupuis-Ganguly}. However, it is quite non-trivial to
implement the weak convergence approach to the SNSEs due to the highly
non-linear term in the equation and the appearance of the jumps. The
crucial step is to show the weak convergence of the SNSEs driven by
counting random measures  with random intensity. To this end, we
decompose the solutions into a sum of the solutions of several
relatively simpler equations and prove the convergence/tightness of
the solutions of each equations.

\vskip0.3cm

 The organization of this paper is as follows. In Section 2,
 we recall the general criteria for a moderate deviation principle given in \cite{Budhiraja-Dupuis-Ganguly}.
 Section 3 is devoted to establishing the moderate deviation principle for the two-dimensional stochastic Navier-Stokes equations driven by multiplicative $L\acute{e}vy$ noises.

\vskip0.3cm
Throughout this paper, $c_N, c_{f, T},\cdots$ are positive constants depending on some parameters $N, f, T, \cdots$, independent of $\e$,
 whose value may be different from line to line.

\section{Preliminaries}

In this section, we will recall the general criteria for a moderate deviation principle given in \cite{Budhiraja-Dupuis-Ganguly}, and to this end, we closely follow the framework and the notations in that paper.

\subsection{Controlled Poisson random measure}\label{Section Representation}

Let $\mathbb{X}$ be a locally compact Polish space. Denote by $\mathcal{M}_{FC}(\mathbb{X})$ the space of all
measures $\vartheta$ on $(\mathbb{X},\mathcal{B}(\mathbb{X}))$ such that $\vartheta(K)<\infty$ for every compact $K$ in
$\mathbb{X}$, and let $C_c(\mathbb{X})$ be
the space of continuous functions with compact supports. Endow $\mathcal{M}_{FC}(\mathbb{X})$ with the weakest topology such that for every $f\in
C_c(\mathbb{X})$, the function
\[\vartheta\rightarrow\< f,\vartheta\>=\int_{\mathbb{X}}f(u)d\vartheta(u)\]
 is continuous   in $\vartheta\in\mathcal{M}_{FC}(\mathbb{X})$.
This topology can be metrized such that $\mathcal{M}_{FC}(\mathbb{X})$ is a Polish space (see e.g.
\cite{Budhiraja-Dupuis-Maroulas.}).
Fix $T\in(0,\infty)$ and let $\mathbb{X}_T=[0,T]\times\mathbb{X}$. Fix a measure
$\vartheta\in\mathcal{M}_{FC}(\mathbb{X})$,
and let $\vartheta_T=\lambda_T\otimes\vartheta$, where $\lambda_T$ is Lebesgue measure on $[0,T]$.
\vskip0.3cm

We recall that a Poisson random measure $\textbf{n}$ on $\mathbb{X}_T$ with intensity measure
$\vartheta_T$ is an $\mathcal{M}_{FC}(\mathbb{X}_T)$ valued random variable such that for each
$B\in\mathcal{B}(\mathbb{X}_T)$
with $\vartheta_T(B)<\infty$, $\textbf{n}(B)$ is Poisson distributed with mean $\vartheta_T(B)$ and for disjoint
$B_1,\cdots,B_k\in\mathcal{B}(\mathbb{X}_T)$, $\textbf{n}(B_1),\cdots,\textbf{n}(B_k)$ are  independent
random
variables (cf. \cite{Ikeda-Watanabe}). Denote by $\mathbb{P}$ the measure induced by $\textbf{n}$ on
$(\mathcal{M}_{FC}(\mathbb{X}_T),\mathcal{B}(\mathcal{M}_{FC}(\mathbb{X}_T)))$.
Then letting $\mathbb{M}=\mathcal{M}_{FC}(\mathbb{X}_T)$, $\mathbb{P}$ is the unique probability measure on
$(\mathbb{M},\mathcal{B}(\mathbb{M}))$
under which the canonical map, $N:\mathbb{M}\rightarrow\mathbb{M},\ N(m)\doteq m$, is a Poisson random measure with
intensity measure $\vartheta_T$. We also consider, for $\theta>0$,
probability
measures $\mathbb{P}_\theta$ on $(\mathbb{M},\mathcal{B}(\mathbb{M}))$ under which $N$ is a Poissson random measure
with intensity $\theta\vartheta_T$. The corresponding expectation operators will be denoted by $\mathbb{E}$ and
$\mathbb{E}_\theta$,
respectively.
\vskip0.3cm

Set $\mathbb{Y}=\mathbb{X}\times[0,\infty)$ and
$\mathbb{Y}_T=[0,T]\times\mathbb{Y}$. Similarly, let
$\bar{\mathbb{M}}=\mathcal{M}_{FC}(\mathbb{Y}_T)$ and let
$\bar{\mathbb{P}}$ be the unique probability measure on
$(\bar{\mathbb{M}},\mathcal{B}(\bar{\mathbb{M}}))$ under which the
canonical map,
$\bar{N}:\bar{\mathbb{M}}\rightarrow\bar{\mathbb{M}},\bar{N}(\bar
m)\doteq \bar m$, is a Poisson random measure with intensity measure
$\bar{\vartheta}_T=\lambda_T\otimes\vartheta\otimes \lambda_\infty$,
with $\lambda_\infty$ being Lebesgue measure on $[0,\infty)$. The
corresponding expectation operator will be denoted by
$\bar{\mathbb{E}}$. Let
\[\mathcal{F}_t\doteq\sigma\{\bar{N}((0,s]\times O):0\leq s\leq t,
O\in\mathcal{B}(\mathbb{Y})\},\] and denote by $\bar{\mathcal{F}}_t$
 the completion of $\mathcal{F}_t$ under $\bar{\mathbb{P}}$. Let $\bar{\mathcal{P}}$ be the predictable $\sigma$-field on
$[0,T]\times\bar{\mathbb{M}}$
with the filtration $\{\bar{\mathcal{F}}_t:0\leq t\leq T\}$ on $(\bar{\mathbb{M}},\mathcal{B}(\bar{\mathbb{M}}))$.
Let
$\bar{\mathcal{A}}_+$ \textbf{(}resp. $\bar{\mathcal{A}}$\textbf{)}
be the class of all $(\bar{\mathcal{P}}\otimes\mathcal{B}(\mathbb{X}))/\mathcal{B}[0,\infty)$\textbf{(}resp. $(\bar{\mathcal{P}}\otimes\mathcal{B}(\mathbb{X}))/\mathcal{B}(\mathbb{R})$\textbf{)}-measurable maps
$\varphi:\mathbb{X}_T\times\bar{\mathbb{M}}\rightarrow[0,\infty)$ \textbf{(}resp. $\varphi:\mathbb{X}_T\times\bar{\mathbb{M}}\rightarrow\mathbb{R}$\textbf{)}. For $\varphi\in\bar{\mathcal{A}}_+$, define a
stochastic counting measure $N^\varphi$ on
$\mathbb{X}_T$ by
 \begin{eqnarray}\label{Jump-representation}
 N^\varphi((0,t]\times U)=\int_{(0,t]\times U}\int_{(0,\infty)}1_{[0,\varphi(s,x)]}(r)\bar{N}(dsdxdr),\
 t\in[0,T],U\in\mathcal{B}(\mathbb{X}).
 \end{eqnarray}
$N^\varphi$ is the controlled random measure, with $\varphi$ selecting the intensity for the points at location
$x$
and time $s$, in a possibly random but non-anticipating way. When $\varphi(s,x,\bar{m})\equiv\theta\in(0,\infty)$, we
write $N^\varphi=N^\theta$. Note that $N^\theta$ has the same distribution with respect to $\bar{\mathbb{P}}$ as $N$
has with respect to $\mathbb{P}_\theta$.
\vskip0.3cm

We end this subsection with some notations.
Define $l:[0,\infty)\rightarrow[0,\infty)$ by
 $$
 l(r)=r\log r-r+1,\ \ r\in[0,\infty).
 $$

For any $\varphi\in\bar{\mathcal{A}}_+$ the quantity
 \begin{eqnarray}\label{L_T}
 L_T(\varphi)=\int_{\mathbb{X}_T}l(\varphi(t,x,\omega))\vartheta_T(dtdx)
 \end{eqnarray}
is well defined as a $[0,\infty]$-valued random variable. Let $\{K_n\subset \mathbb{X},\
n=1,2,\cdots\}$
be an increasing sequence of compact sets such that $\cup _{n=1}^\infty K_n=\mathbb{X}$. For each $n$ let
 \begin{eqnarray*}
 \bar{\mathcal{A}}_{b,n}
 &\doteq&
 \{\varphi\in\bar{\mathcal{A}}_+:
 \ for\ all\ (t,\omega)\in[0,T]\times\bar{\mathbb{M}},\
 n\geq\varphi(t,x,\omega)\geq
 1/n\ if\ x\in K_n\ \\
 &&\ \ \ \ \ \ \ \ \ \ \ \ \ \ \ \ \ \ \ \ \ \ \ \ \ \ \ \ \ \ \ \ \ \ \ \ \ \ \ \ \
 \ \ \ \
 and\ \varphi(t,x,\omega)=1\ if\ x\in K_n^c
 \},
 \end{eqnarray*}
and let $\bar{\mathcal{A}}_{b}=\cup _{n=1}^\infty\bar{\mathcal{A}}_{b,n}$.

\subsection{A General Moderate Deviation Result}
In this subsection, we recall a general criteria for a moderate deviation principle introduced in \cite{Budhiraja-Dupuis-Ganguly}.

Assume that $a(\e)$ satisfies (\ref{h}). Let
$\{\mathcal{G}^\epsilon\}_{\epsilon>0}$
be a family of measurable maps from $\mathbb{M}$ to $\mathbb{U}$, where $\mathbb{M}$ is introduced in
Subsection \ref{Section Representation} and $\mathbb{U}$ is a Polish space.
We present
below a sufficient condition for large deviation principle (LDP in abbreviation) to hold for the family
$\mathcal{G}^\epsilon(\epsilon N^{\epsilon^{-1}})$ as $\epsilon\rightarrow 0$, with speed $\e/{a^2(\e)}$ and a rate function that
is given though a suitable quadratic form, which is the so-called moderate deviation principle (MDP for short, cf. \cite{Dembo-Zeitouni}).

For $\e>0$ and $M<\infty$, consider the spaces
\begin{eqnarray}\label{eq s}
& &\mathcal{S}^M_{+,\e}=\{\varphi:\ \mathbb{X}\times[0,T]\to\mathbb{R}_+\ |\ L_T(\varphi)\leq M a^2(\e)\}\\
& &\mathcal{S}^M_{\e}=\{\psi:\ \mathbb{X}\times[0,T]\to\mathbb{R}\ |\ \psi=(\varphi-1)/{a(\e)},\ \varphi\in\mathcal{S}^M_{+,\e}\}.\nonumber
\end{eqnarray}
We also let
\begin{eqnarray}\label{eq u}
& &\mathcal{U}^M_{+,\e}=\{\varphi\in\bar{\mathcal{A}}_b:\ \varphi(\cdot,\cdot,\omega)\in\mathcal{S}^M_{+,\e},\ \bar{\mathbb{P}}\text{-}a.s.\}\\
& &\mathcal{U}^M_{\e}=\{\psi\in\bar{\mathcal{A}}:\ \psi(\cdot,\cdot,\omega)\in\mathcal{S}^M_{\e},\ \bar{\mathbb{P}}\text{-}a.s.\}\nonumber
\end{eqnarray}

The norm in the Hilbert space $L^2(\vartheta_T)$ will be denoted by
$\|\cdot\|_2$ and $B_2(R)$ denotes the ball of radius $R$ in
$L^2(\vartheta_T)$. Throughout this paper $B_2(R)$ is equipped with
the weak topology of $L^2(\vartheta_T)$ and it is therefore weakly
compact. Given a map $\mathcal{G}_0:\ L^2(\vartheta_T)\to\mathbb{U}$
and $\eta\in\mathbb{U}$, let
$$
\mathbb{S}^0_\eta=\{\psi\in L^2(\vartheta_T):\ \eta=\mathcal{G}_0(\psi)\}
$$
and define $I$ by
\begin{eqnarray}\label{eq I}
I(\eta)=\inf_{\psi\in\mathbb{S}^0_\eta}\Big[\frac{1}{2}\|\psi\|^2_2\Big].
\end{eqnarray}
By convention, $I(\eta)=+\infty$ if $\mathbb{S}^0_\psi=\emptyset$.
\vskip0.3cm

Suppose $\varphi\in\mathbb{S}^M_{+,\e}$. By Lemma 3.2 in \cite{Budhiraja-Dupuis-Ganguly}, there exists
 $\kappa_2(1)\in(0,\infty)$ that is
independent of $\e$ and such that $\psi1_{\{|\psi|\leq 1/{a(\e)}\}}\in B_2(\sqrt{M\kappa_2(1)})$,
where $\psi=(\varphi-1)/{a(\e)}$. In this
paper, we use the symbol $``\Rightarrow"$ to denote convergence in distribution.
\vskip0.3cm

{\bf Condition MDP: } Let $\mathcal{G}_0: L^2(\vartheta_T)\to
\mathbb{U}$ be measurable and satisfy:

\begin{itemize}
  \item[\textbf{(MDP-1)}] Given $M>0$, suppose that $g^\e,\ g\in B_2(M)$ and $g^\e\to g$. Then
  $$
  \mathcal{G}_0(g^\e)\to\mathcal{G}_0(g)\ \ \ \  \rm in\ \mathbb{U}.
  $$
  \item[\textbf{(MDP-2)}] Given $M>0$, let $\{\varphi^\e\}_{\e>0}$ be such that for every $\e>0$,
  $\varphi^\e\in\mathcal{U}^M_{+,\e}$ and for some
  $\beta\in(0,1]$, $\psi^\e1_{\{|\psi^\e|\leq \beta/a(\e)\}}\Rightarrow\psi$ in $B_2(\sqrt{M\kappa_2(1)})$
  where $\psi^\e=(\varphi^\e-1)/a(\e)$.
  Then
  \[
  \mathcal{G}^\e(\e N^{\e^{-1}\varphi^\e})\Rightarrow \mathcal{G}_0(\psi)\ \ \ \ \rm in\ \mathbb{U}.
  \]
\end{itemize}
\vskip0.3cm

The following criteria was established in \cite{Budhiraja-Dupuis-Ganguly}.
\begin{thm}\label{thm MDP}
Suppose that the functionals $\mathcal{G}^\e$ and $\mathcal{G}_0$ satisfy {\rm\textbf{Condition MDP}}. Then $\{Y^\e\equiv  \mathcal{G}^\e(\e N^{\e^{-1}}), \varepsilon>0\}$ satisfies a large deviation principle with speed $\e/a^2(\e)$ and rate function $I$ defined in (\ref{eq I}).
\end{thm}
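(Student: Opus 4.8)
The plan is to prove the equivalent Laplace principle and then invoke the standard equivalence between the Laplace principle with a good rate function and the large deviation principle on a Polish space. Writing $r(\e)=\e/a^2(\e)$ for the speed parameter, I want to show that for every bounded continuous $F:\mathbb U\to\mathbb R$,
\[
-r(\e)\log\bar{\mathbb E}\Big[\exp\big(-r(\e)^{-1}F(\mathcal G^\e(\e N^{\e^{-1}}))\big)\Big]\longrightarrow \inf_{\eta\in\mathbb U}\{F(\eta)+I(\eta)\}.
\]
Before that I would check that $I$ is a good rate function: its sublevel set $\{I\le M\}$ is exactly $\mathcal G_0\big(B_2(\sqrt{2M})\big)$, and since $B_2(\sqrt{2M})$ is weakly compact and, by \textbf{(MDP-1)}, $\mathcal G_0$ is continuous on bounded balls for the weak topology, this image is compact. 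The engine of the argument is the variational (entropy) representation for functionals of a Poisson random measure from \cite{Budhiraja-Dupuis-Maroulas.}: applying it to $r(\e)^{-1}F\circ\mathcal G^\e$ and rescaling yields
\[
-r(\e)\log\bar{\mathbb E}\Big[\exp\big(-r(\e)^{-1}F(\mathcal G^\e(\e N^{\e^{-1}}))\big)\Big]=\inf_{\varphi\in\bar{\mathcal A}_+}\bar{\mathbb E}\Big[\tfrac{1}{a^2(\e)}L_T(\varphi)+F\big(\mathcal G^\e(\e N^{\e^{-1}\varphi})\big)\Big],
\]
so that controls of $O(1)$ cost are precisely those in $\mathcal U^M_{+,\e}$, since $\tfrac{1}{a^2(\e)}L_T(\varphi)\le M$ is exactly the constraint defining $\mathcal S^M_{+,\e}$.

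For the Laplace upper bound (equivalently the LDP lower bound) I would fix $\eta$ almost minimizing $F+I$, pick $\psi\in\mathbb S^0_\eta$ with $\tfrac12\|\psi\|_2^2\le I(\eta)+\delta$, and feed the deterministic control $\varphi^\e:=1+a(\e)\psi$ (suitably truncated so as to lie in $\bar{\mathcal A}_b$) into the representation to bound the infimum from above. The cost converges, $\tfrac{1}{a^2(\e)}L_T(\varphi^\e)\to\tfrac12\|\psi\|_2^2$, by the second-order expansion $l(1+x)\sim x^2/2$ together with dominated convergence; and since for a fixed $\psi\in L^2(\vartheta_T)$ the truncation in \textbf{(MDP-2)} is asymptotically inactive, \textbf{(MDP-2)} gives $\mathcal G^\e(\e N^{\e^{-1}\varphi^\e})\Rightarrow\mathcal G_0(\psi)=\eta$. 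Boundedness and continuity of $F$ then let me pass to the limit and obtain $\limsup_\e\le F(\eta)+\tfrac12\|\psi\|_2^2\le F(\eta)+I(\eta)+\delta$, and $\delta$, $\eta$ are arbitrary.

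For the Laplace lower bound (equivalently the LDP upper bound) I would start from near-optimal controls $\varphi^\e\in\bar{\mathcal A}_+$ in the representation whose costs $\tfrac{1}{a^2(\e)}L_T(\varphi^\e)$ are bounded by some $M$, so that $\varphi^\e\in\mathcal U^M_{+,\e}$. By the remark preceding \textbf{Condition MDP} (Lemma 3.2 of \cite{Budhiraja-Dupuis-Ganguly}), the truncated controls $\psi^\e 1_{\{|\psi^\e|\le 1/a(\e)\}}$, with $\psi^\e=(\varphi^\e-1)/a(\e)$, lie in the weakly compact ball $B_2(\sqrt{M\kappa_2(1)})$; passing to a subsequence I may assume $\psi^\e 1_{\{|\psi^\e|\le\beta/a(\e)\}}\Rightarrow\psi$ there. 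Then \textbf{(MDP-2)} yields $\mathcal G^\e(\e N^{\e^{-1}\varphi^\e})\Rightarrow\mathcal G_0(\psi)$, and combining the weak lower semicontinuity of the squared $L^2$-norm with the asymptotic bound $\liminf_\e \tfrac{1}{a^2(\e)}L_T(\varphi^\e)\ge\tfrac12\|\psi\|_2^2\ge I(\mathcal G_0(\psi))$ and the continuity of $F$ gives $\liminf_\e\ge\inf_\eta\{F(\eta)+I(\eta)\}$.

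The main obstacle is precisely this last liminf inequality for the cost, namely reconciling the \emph{untruncated} control $\varphi^\e$ appearing in $L_T(\varphi^\e)$ with the \emph{truncated} object $\psi^\e 1_{\{|\psi^\e|\le\beta/a(\e)\}}$ that actually converges weakly. One must show that the contribution of the large-intensity set $\{|\psi^\e|>\beta/a(\e)\}$ (i.e. $\varphi^\e>1+\beta$) is asymptotically negligible and does not spoil the quadratic lower bound; this is exactly where the superlinear growth of $l$ and the fine estimates of Lemma 3.2 in \cite{Budhiraja-Dupuis-Ganguly} are indispensable, and it is the reason \textbf{(MDP-2)} is formulated with the truncation parameter $\beta$. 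A secondary technical point is justifying the expansion of $l$ uniformly enough to deliver both the limit of the cost in the upper bound and its liminf in the lower bound.
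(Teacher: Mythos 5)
Before comparing, note a structural fact: the paper does not prove this theorem at all. It appears as Theorem \ref{thm MDP} with the single sentence ``The following criteria was established in \cite{Budhiraja-Dupuis-Ganguly}'', so there is no in-paper proof to measure your proposal against; the only meaningful comparison is with the proof in that reference. Your outline does follow exactly that reference's architecture: reduce the LDP to the Laplace principle with a good rate function, verify goodness via weak compactness of $B_2(\sqrt{2M})$ and \textbf{(MDP-1)} (your identification $\{I\le M\}=\mathcal G_0\bigl(B_2(\sqrt{2M})\bigr)$, with the infimum attained, is correct), invoke the Budhiraja--Dupuis--Maroulas variational representation, use deterministic controls $\varphi^\e=1+a(\e)\psi$ for the Laplace upper bound, and use compactness of truncated controls plus \textbf{(MDP-2)} for the lower bound.

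As a standalone proof, however, the proposal has genuine gaps, one of which you name yourself. First, the liminf inequality
\begin{equation*}
\liminf_{\e\to 0}\frac{1}{a^2(\e)}\bar{\mathbb E}\,L_T(\varphi^\e)\ \ge\ \frac12\,\bar{\mathbb E}\|\psi\|_2^2,
\end{equation*}
in which only the truncated controls $\psi^\e 1_{\{|\psi^\e|\le\beta/a(\e)\}}$ converge weakly, is not an application of an off-the-shelf lemma: it is the core technical content of \cite{Budhiraja-Dupuis-Ganguly} (their Lemma 3.2 and surrounding estimates, which split the regions $\{|\psi^\e|\le\beta/a(\e)\}$ and $\{|\psi^\e|>\beta/a(\e)\}$ and exploit both the quadratic behavior of $l(1+x)$ near $x=0$ and its superlinear growth at infinity). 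Flagging this as ``the main obstacle'' is accurate, but it leaves the argument open precisely where the theorem is hard. Second, in the lower bound you pass from near-optimal controls, whose cost is bounded only \emph{in expectation} by the representation and the boundedness of $F$, to the assertion $\varphi^\e\in\mathcal U^M_{+,\e}$; but membership in $\mathcal U^M_{+,\e}$ requires the pathwise bound $L_T(\varphi^\e)\le Ma^2(\e)$, $\bar{\mathbb P}$-a.s. One needs the standard stopping-time modification (reset the control to $1$ once the running cost exceeds the threshold, and show this changes the expected value negligibly), together with the reduction to controls in $\bar{\mathcal A}_b$; without these steps \textbf{(MDP-2)} is not even applicable to your $\varphi^\e$. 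With those two ingredients supplied --- i.e.\ with the actual content of \cite{Budhiraja-Dupuis-Ganguly} --- your outline becomes the proof.
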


\section{Moderate Deviation Principles}

Let $V, H$ be the Hilbert spaces introduced in Section 1.
 Denote by $V'$ the dual of $V$.  Identifying $H$ with its dual $H'$, we have the dense, continuous embedding
$$
V\hookrightarrow H\cong H'\hookrightarrow V'.
$$
In this way, we may consider $A$ as a bounded operator from $V$ to
$V'$. The inner product in $H$ is denoted by $\< \cdot,
\cdot\>$. Moreover, we denote by $(\cdot,\cdot)$, the duality
between $V$ and $V'$. Hence, for $u=(u_i)\in V$, $w=(w_i)\in V$, we
have \beq\label{A} ( Au,w)= \nu\sum_{i,j=1}^2\int_D \partial_i
u_j\partial _i w_j dx. \nneq
 Define $b(\cdot,\cdot,\cdot):V\times V\times V\rightarrow \rr $ by
 \beq\label{b}
 b(u,v,w)=\sum_{i,j=1}^2\int_D u_i\partial_iv_jw_jdx.
\nneq In particular, if $u,v,w\in V$, then
$$
( B(u,v),w)=( (u\cdot\nabla)v,w)=\sum_{i,j=1}^2\int_D u_i\partial_iv_jw_jdx=b(u,v,w).
$$
$B(u)$ will be used to denote $B(u,u)$. By integration by parts,
\beq\label{b1}
b(u,v,w)=-b(u,w,v),
\nneq
therefore
\beq\label{b2}
b(u,v,v)=0, \ \ \ \ \forall u,v\in V.
\nneq

The following well-known estimates for $b$ (see \cite{Temam} and \cite{Srith-Sundar} for example) will be required in the rest of this paper:
\begin{align}
&\label{b4}|b(u,v,w)|\le 2\|u\|^{\frac12}\cdot|u|^{\frac12}\cdot\|v\|^{\frac12}\cdot|v|^{\frac12}\cdot\|w\|,\\
&\label{b6}|b(u,u, v)|\le \frac12\|u\|^2+32\|v\|_{L^4}^4\cdot|u|^2,\\
&\label{b7}|( B(u)-B(v),u-v)|\le \frac12\|u-v\|^2+c |u-v|^2\cdot \|v\|^4_{L^4},
\end{align}
where \beq\label{L4}
\|v\|^4_{L^4}\le \|v\|^2|v|^2.
\nneq

\vskip0.3cm

Now, we state the assumptions on the coefficients and collect some preliminary results from \cite{Budhiraja-Dupuis-Ganguly}, which will be used in the sequel.

\vskip0.3cm {\bf Condition A:}\ \
The coefficient  $G: H\times \mathbb{X}\rightarrow H$ and the force $f$  satisfy the following hypotheses:
\begin{itemize}
  \item[(A.1)] for some $L_G\in L^2(\vartheta)$,
  \beq\label{eq G L}
     |G(x_1,y)-G(x_2,y)|\leq L_G(y)|x_1-x_2|,\ \ \ x_1,\ x_2\in H,\ \ y\in\mathbb{X};
  \nneq
  \item[(A.2)] for some $M_G\in L^2(\vartheta)$,
  \beq\label{eq G M}
     |G(x,y)|\leq M_G(y)(1+|x|),\ \ \ x\in H,\ \ y\in\mathbb{X};
  \nneq

\item[(A.3)] $f\in L^2([0,T];V')$, i.e.,
\beq\label{eq f}
\int_0^T\|f(s)\|_{V'}^2ds<\infty.
\nneq
\end{itemize}

The following result follows by standard arguments (see \cite{B-Liu-Zhu}, \cite{Temam}).
\begin{thm}
Fix $u_0\in H$, and assume {\bf Condition A}. Let $u^\e$ be the unique solution of equation \eqref{SNSE-01} in $L^2(\Omega;D([0,T];H))\cap L^2(\Omega\times[0,T];V)$,
and $u^0$  the unique solution of equation \eqref{NSE} in $C([0,T],H)\cap L^2([0,T],V)$. Then, the following estimates hold: there exists
$\e_0>0$ such that
\begin{eqnarray}\label{Es u e}
\sup_{\e\in(0,\e_0]}\Big[\mathbb{E}\Big(\sup_{t\in[0,T]}|u^{\e}(t)|^2\Big)+\mathbb{E}\Big(\int_0^T\|u^{\e}(t)\|^2dt\Big)\Big]\leq C_{f,T,u_0};
\end{eqnarray}
and
\begin{eqnarray}\label{Es u 0}
\sup_{t\in[0,T]}|u^0(t)|^2+\int_0^T\|u^0(t)\|^2dt\leq C_{f,T,u_0}.
\end{eqnarray}
\end{thm}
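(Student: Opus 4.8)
The plan is to establish the theorem in two parts. For the deterministic equation \eqref{NSE}, the existence, uniqueness, and estimate \eqref{Es u 0} are entirely classical; I would invoke the standard Galerkin approximation argument (as in \cite{Temam}), testing the equation against $u^0(t)$ itself. Using the skew-symmetry \eqref{b2}, the nonlinear term $(B(u^0),u^0)=b(u^0,u^0,u^0)=0$ drops out, leaving
\begin{equation}
\frac{1}{2}\frac{d}{dt}|u^0(t)|^2+\|u^0(t)\|^2=(f(t),u^0(t)).
\end{equation}
Bounding the right-hand side by $\|f(t)\|_{V'}\|u^0(t)\|\le\frac{1}{2}\|u^0(t)\|^2+\frac{1}{2}\|f(t)\|_{V'}^2$ via Young's inequality, absorbing the $\|u^0\|^2$ term, and integrating in time yields \eqref{Es u 0} with a constant depending only on $|u_0|$, $\int_0^T\|f\|_{V'}^2\,ds$, and $T$, invoking assumption (A.3).

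For the stochastic equation \eqref{SNSE-01}, the existence and uniqueness of the solution in $L^2(\Omega;D([0,T];H))\cap L^2(\Omega\times[0,T];V)$ again follow by standard methods for SPDEs with locally monotone coefficients and L\'evy noise (see \cite{B-Liu-Zhu}); I would not reproduce those details. The substantive task is the uniform-in-$\e$ a priori estimate \eqref{Es u e}. My approach is to apply the It\^o formula for $|u^\e(t)|^2$ with respect to the compensated Poisson integral. Again the drift nonlinearity vanishes by \eqref{b2}, so the principal part gives $\frac{1}{2}\frac{d}{dt}|u^\e|^2+\|u^\e\|^2$ matched against $(f,u^\e)$, treated as above. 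The noise contributes a martingale term (vanishing in expectation) plus a compensator of the form $\e^2\,\e^{-1}\int_{\mathbb{X}}|G(u^\e(t-),v)|^2\,\vartheta(dv)\,dt$; using the growth bound (A.2), $|G(u^\e,v)|^2\le M_G(v)^2(1+|u^\e|)^2\le 2M_G(v)^2(1+|u^\e|^2)$, this compensator is controlled by $C\e\|M_G\|_{L^2(\vartheta)}^2(1+|u^\e|^2)$.

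After taking expectations and integrating, the crux is a Gronwall argument: the resulting inequality reads, schematically,
\begin{equation}
\mathbb{E}|u^\e(t)|^2+\mathbb{E}\int_0^t\|u^\e(s)\|^2\,ds\le C_{f,T,u_0}+C\e\int_0^t\mathbb{E}|u^\e(s)|^2\,ds,
\end{equation}
and choosing $\e_0$ small keeps the $\e$-dependent coefficient bounded uniformly, so Gronwall delivers a bound independent of $\e\in(0,\e_0]$. The main obstacle is the supremum estimate $\mathbb{E}(\sup_{t}|u^\e(t)|^2)$: here I cannot simply take expectations, and must instead estimate the supremum of the stochastic integral. I would apply a Burkholder--Davis--Gundy inequality for the purely discontinuous martingale driven by $\widetilde{N}^{\e^{-1}}$, bounding $\mathbb{E}\sup_{s\le t}|M^\e(s)|$ by the expectation of the square root of its quadratic variation, then splitting off a factor of $\sup_s|u^\e(s)|$ and absorbing it into the left-hand side with a small constant via Young's inequality. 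Care with the $\e$-scaling of the BDG constant — the intensity $\e^{-1}$ times the $\e^2$ prefactor again producing a net $\e$ — is what secures uniformity; this bookkeeping, together with the final Gronwall step, is the part requiring the most attention.
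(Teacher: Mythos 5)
Your proposal is correct and follows essentially the same route as the paper: the paper offers no proof of this theorem, attributing it to standard arguments in \cite{B-Liu-Zhu} and \cite{Temam}, and your sketch --- the energy identity with the nonlinearity killed by \eqref{b2}, It\^o's formula for $|u^\e(t)|^2$, the $\e^{2}\cdot\e^{-1}=\e$ compensator bookkeeping under (A.2), Burkholder--Davis--Gundy with absorption of $\sup_{t}|u^\e(t)|^2$, and Gronwall --- is precisely that standard argument. It also matches, step for step, the detailed proof the paper does supply for the analogous uniform estimate \eqref{es X e} on the controlled process $X^\e$.
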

\vskip 0.5cm

We now state a LDP for $\{Y^{\e}\}$ (namely, the MDP for $u^\e, \e>0$), where
\begin{eqnarray}\label{Eq Y e}
Y^{\e}=(u^\e-u^0)/a(\e),
\end{eqnarray}
and $a(\e)$ is as in (\ref{h}). To this end, we need to impose one more condition which will be stated below.

We define a class of functions by
\begin{eqnarray*}\label{Fun h}
\mathcal{H}=\Big\{h:\mathbb{X}\to\mathbb{R}:\ \exists\delta>0, s.t.\ \forall\Gamma\ with\ \vartheta(\Gamma)<\infty,\ \int_\Gamma\exp(\delta h^2(y))\vartheta(dy)<\infty\Big\}.
\end{eqnarray*}

\vskip0.3cm

{\bf Condition B:} The functions $L_G$ and $M_G$ are in the class $\mathcal{H}$.
\vskip0.3cm

The following theorem is our main result.
\begin{thm}\label{thm main}
Suppose that Conditions A and B hold. Then $\{Y^\e\}$ satisfies a large deviation principle in $D([0,T],H)\cap L^2([0,T],V)$
with speed $\e/{a^2(\e)}$ and the rate function given by
$$
I(\eta)=\inf_{\psi}\Big\{\frac{1}{2}\|\psi\|^2_2\Big\},
$$
where the infimum is taken over all $\psi\in L^2(\vartheta_T)$ such that $(\eta,\psi)$ satisfies the following equation
\begin{eqnarray}\label{Eq S I}
\frac{d}{dt}\eta(t)&=&-A\eta(t)-B(\eta(t),u^0(t))-B(u^0(t),\eta(t))\nonumber\\
&&+\int_{\mathbb{X}}\psi(y,t)G(u^0(t),y)\vartheta(dy),
\end{eqnarray}
with initial value $\eta(0)=0$.
\end{thm}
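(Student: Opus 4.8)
The plan is to verify \textbf{Condition MDP} for the pair $(\mathcal{G}^\e, \mathcal{G}_0)$, where $\mathcal{G}^\e(\e N^{\e^{-1}})=Y^\e$ and $\mathcal{G}_0(\psi)=\eta$ is the solution of \eqref{Eq S I}, and then invoke Theorem \ref{thm MDP}. First I would set up the controlled stochastic equation: for $\varphi^\e\in\mathcal{U}^M_{+,\e}$, by a Girsanov-type change of measure the solution $u^{\e,\varphi^\e}$ of \eqref{SNSE-01} with $N^{\e^{-1}}$ replaced by the controlled measure $N^{\e^{-1}\varphi^\e}$ is well defined, and the rescaled quantity $Y^{\e,\varphi^\e}=(u^{\e,\varphi^\e}-u^0)/a(\e)$ solves an equation whose drift contains the linearized Navier--Stokes operator together with the noise term $\e\,a(\e)^{-1}\int_{\mathbb{X}}G(u^{\e,\varphi^\e}(t-),v)\,\widetilde N^{\e^{-1}\varphi^\e}(dtdv)$ plus a compensator term involving $\psi^\e=(\varphi^\e-1)/a(\e)$. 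The goal of \textbf{(MDP-2)} is to show $Y^{\e,\varphi^\e}\Rightarrow\eta=\mathcal{G}_0(\psi)$ in $D([0,T],H)\cap L^2([0,T],V)$.

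The verification of \textbf{(MDP-1)} is comparatively soft: one takes $g^\e\to g$ weakly in $B_2(M)\subset L^2(\vartheta_T)$ and shows that the corresponding solutions $\eta^\e=\mathcal{G}_0(g^\e)$ of the \emph{deterministic} skeleton equation \eqref{Eq S I} converge to $\mathcal{G}_0(g)$. The strategy here is to derive uniform energy estimates in $C([0,T],H)\cap L^2([0,T],V)$ using the estimates \eqref{b4}--\eqref{L4} for $b$, the coercivity \eqref{A} of $A$, and the growth bound \eqref{eq G M} on $G$ (together with Condition B to handle the $L^2(\vartheta_T)$ pairing against $g^\e$), then pass to the limit along a subsequence, using that the linearized bilinear terms $B(\eta,u^0)+B(u^0,\eta)$ are linear in $\eta$ and that weak convergence of $g^\e$ passes through the linear integral $\int_{\mathbb{X}}g^\e(y,t)G(u^0(t),y)\vartheta(dy)$. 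Uniqueness of the skeleton solution then upgrades subsequential convergence to full convergence.

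The main work is \textbf{(MDP-2)}. The crucial step, as the authors emphasize in the introduction, is the weak convergence of the controlled SNSEs driven by the counting measure $N^{\e^{-1}\varphi^\e}$ with random intensity. My plan is to decompose $Y^{\e,\varphi^\e}$ as a sum: a part carrying the martingale/stochastic integral against $\widetilde N^{\e^{-1}\varphi^\e}$, a part carrying the controlled drift $\int_{\mathbb{X}}\psi^\e(y,t)G(u^{\e,\varphi^\e}(t-),y)\vartheta(dy)$, and a remainder capturing the nonlinear correction. For each piece I would establish (i) uniform-in-$\e$ a priori bounds in $L^2(\Omega; D([0,T],H)\cap L^2([0,T],V))$ via Itô's formula for the $|\cdot|^2$-norm, the bilinear estimates \eqref{b4}--\eqref{b7}, Gronwall's inequality, and the Burkholder--Davis--Gundy inequality for the jump integral, using the moment control \eqref{Es u e}--\eqref{Es u 0}; (ii) tightness in the Skorokhod space, then apply the Skorokhod representation theorem to realize the convergences on a common probability space. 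The genuinely hard part will be controlling the highly nonlinear term $B(u^{\e,\varphi^\e})$ and identifying the weak limit of the controlled drift: one must show that the stochastic integral against $\widetilde N^{\e^{-1}\varphi^\e}$ vanishes in probability (because its predictable quadratic variation scales like $\e/a^2(\e)\to0$), while simultaneously the compensator term $\int_{\mathbb{X}}\psi^\e G(u^{\e,\varphi^\e},y)\vartheta(dy)$ converges to $\int_{\mathbb{X}}\psi\,G(u^0,y)\vartheta(dy)$. The difficulty is that $\psi^\e\Rightarrow\psi$ only after truncation at level $\beta/a(\e)$, so the large-jump contribution $\psi^\e 1_{\{|\psi^\e|>\beta/a(\e)\}}$ must be shown to be asymptotically negligible; this is precisely where Condition B (the exponential integrability of $L_G, M_G$ in the class $\mathcal{H}$) enters, allowing one to bound the truncated remainder using the function $l(r)=r\log r-r+1$ and the cost constraint $L_T(\varphi^\e)\le Ma^2(\e)$. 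Once the nonlinear term is shown to pass to the limit (using that $u^{\e,\varphi^\e}\to u^0$ so that $B(u^{\e,\varphi^\e})\approx B(u^0)+a(\e)[B(Y^{\e,\varphi^\e},u^0)+B(u^0,Y^{\e,\varphi^\e})]$ to leading order), the limit of $Y^{\e,\varphi^\e}$ satisfies \eqref{Eq S I} driven by $\psi$, and uniqueness of that equation identifies the limit as $\mathcal{G}_0(\psi)$, completing \textbf{(MDP-2)} and hence the theorem.
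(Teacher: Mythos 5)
Your overall route coincides with the paper's: reduce the theorem to Theorem \ref{thm MDP} by verifying \textbf{Condition MDP}, treat \textbf{(MDP-1)} as a deterministic continuity property of the skeleton map $\mathcal{G}_0$, and treat \textbf{(MDP-2)} by decomposing the controlled solution into a stochastic-integral part, compensator/drift parts split by the truncation $1_{\{|\psi^\e|\leq\beta/a(\e)\}}$, and a nonlinear remainder. Your \textbf{(MDP-2)} sketch --- the martingale term vanishing because its bracket scales like $\e/a^2(\e)\to 0$, the large-jump part of $\psi^\e$ negligible via Condition B and the cost constraint $L_T(\varphi^\e)\leq Ma^2(\e)$, the Lipschitz correction killed by $X^\e\to u^0$, and Skorokhod representation followed by pathwise energy estimates for the bilinear remainder --- is precisely Steps 1--4 of Proposition \ref{prop 02}, resting on Lemmas \ref{lem 4.2}--\ref{lem 4.7} and the a priori bound (\ref{es X e}).

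The genuine gap is in your treatment of \textbf{(MDP-1)}, which you call ``comparatively soft''. You propose uniform energy estimates, extraction of a subsequence, passing to the limit by linearity, and uniqueness to upgrade to full convergence. That scheme only yields weak subsequential convergence of $\eta^\e=\mathcal{G}_0(g^\e)$ (weak in $L^2([0,T],V)$, weak-star in $L^\infty([0,T],H)$), whereas \textbf{(MDP-1)} demands convergence in the \emph{strong} topology of $C([0,T],H)\cap L^2([0,T],V)$. The obstruction is concrete: in the energy identity for $\eta^\e-\eta$, the forcing contributes the term $2\int_0^t\langle \int_{\mathbb{X}}(g^\e-g)(y,s)G(u^0(s),y)\vartheta(dy),\,\eta^\e(s)-\eta(s)\rangle\, ds$, a pairing of two sequences each of which converges only weakly, so it cannot be made small by Gronwall alone, and neither linearity nor uniqueness repairs this. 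The paper closes the hole with a compactness input you never invoke: Lemma \ref{lem-thm2-01} (R\"ockner--Zhang), stating that the Stokes-semigroup convolution $R$ maps uniformly integrable subsets of $L^1([0,T],H)$ into relatively compact subsets of $C([0,T],H)$. Applied to $f^\e=\int_{\mathbb{X}}g^\e(y,\cdot)G(u^0(\cdot),y)\vartheta(dy)$, it gives strong $C([0,T],H)$ convergence of the auxiliary linear part $Z^\e$; only after that do the Gronwall estimates for the remaining bilinear part close, yielding (\ref{eq LLL}) and hence strong convergence of $\mathcal{G}_0(g^\e)$. Your plan needs this lemma or an equivalent compactness device (an Aubin--Lions argument would still leave the uniform-in-time convergence to be established). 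A minor further correction: Condition B plays no role in \textbf{(MDP-1)} --- the growth bound (\ref{eq G M}) with $M_G\in L^2(\vartheta)$ from Condition A suffices there; Condition B is needed only for Lemmas \ref{lem 4.2}--\ref{lem 4.7} in \textbf{(MDP-2)}.
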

\begin{proof}
\noindent{\bf Proof of Theorem \ref{thm main}}

According to Theorem \ref{thm MDP}, it suffices to prove that \textbf{Condition MDP} is fulfilled. The verification of Condition \textbf{MDP-1} will be given by Proposition \ref{Prop 01}. Condition \textbf{MDP-2} will be established in Proposition \ref{prop 02}.
\end{proof}

\vspace{2mm}

Let $\{T_t,t\geq0\}$ denote the semigroup generated by $-A$. It is easy to see that $T_t,t\geq0$ are
    compact operators. For $f\in L^1([0,T],H)$, define the mapping
    \begin{eqnarray*}
        Rf(t)=\int_0^tT_{t-s}f(s)ds,\ t\geq0,
    \end{eqnarray*}
    which is the mild solution of the equation:
    \begin{eqnarray*}
       Z(t)=-\int_0^tAZ(s)ds+\int_0^tf(s)ds.
    \end{eqnarray*}

We recall the following lemma proved in \cite{Rockner-Zhang} (see Proposition 5.4 there).
\begin{lem}\label{lem-thm2-01}
    If $\mathcal{D}\subset L^1([0,T],H)$ is uniformly integrable, then the image family $\mathcal{Y}=R(\mathcal{D})$ is
    relatively compact in $C([0,T],H)$.
\end{lem}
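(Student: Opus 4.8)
The plan is to apply the Arzel\`{a}--Ascoli theorem in $C([0,T],H)$. Thus I must verify two things: (i) for each fixed $t\in[0,T]$ the slice $\{Rf(t):f\in\mathcal{D}\}$ is relatively compact in $H$, and (ii) the family $\{Rf:f\in\mathcal{D}\}$ is equicontinuous on $[0,T]$ (which in particular places the image inside $C([0,T],H)$). I will in fact prove the stronger statement that the whole image set $K:=\{Rf(t):f\in\mathcal{D},\ t\in[0,T]\}$ is relatively compact in $H$, since this is exactly what is needed to close the equicontinuity argument. Throughout I use that $\{T_t\}$ is a contraction semigroup on $H$ (so $|T_r x|\le|x|$) and that $T_\eta$ is a compact operator for every $\eta>0$, together with the two consequences of the uniform integrability of $\mathcal{D}$: boundedness $C:=\sup_{f\in\mathcal{D}}\int_0^T|f(s)|\,ds<\infty$, and the modulus $\omega(\eta):=\sup_{f\in\mathcal{D}}\sup_{\lambda_T(E)\le\eta}\int_E|f(s)|\,ds\to0$ as $\eta\to0$.

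For the compactness of $K$, fix $\eta>0$ and split, for $t\ge\eta$,
\[
Rf(t)=\int_0^{t-\eta}T_{t-s}f(s)\,ds+\int_{t-\eta}^t T_{t-s}f(s)\,ds
=T_\eta\,Rf(t-\eta)+\int_{t-\eta}^t T_{t-s}f(s)\,ds,
\]
where I used the semigroup property $T_{t-s}=T_\eta T_{t-\eta-s}$. The last integral has $H$-norm at most $\omega(\eta)$, uniformly in $f$ and $t$, and for $t<\eta$ one has directly $|Rf(t)|\le\omega(\eta)$. The collection $\mathcal{B}:=\{Rf(r):f\in\mathcal{D},\ r\in[0,T]\}$ is bounded in $H$ by $C$, so $T_\eta\mathcal{B}$ is relatively compact because $T_\eta$ is compact. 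Hence $K$ is covered by $T_\eta\mathcal{B}$ up to error $\omega(\eta)$ (together with a ball of radius $\omega(\eta)$ for the small times), so $K$ is totally bounded, and therefore relatively compact, in $H$. In particular each slice in (i) is relatively compact.

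For equicontinuity, take $0\le t_1<t_2\le T$ and write, using again $T_{t_2-s}=T_{t_2-t_1}T_{t_1-s}$,
\[
Rf(t_2)-Rf(t_1)=\int_{t_1}^{t_2}T_{t_2-s}f(s)\,ds+\big(T_{t_2-t_1}-I\big)Rf(t_1).
\]
The first term is bounded in $H$-norm by $\omega(t_2-t_1)$, uniformly in $f$. For the second term, note $Rf(t_1)\in\overline{K}$, a compact subset of $H$ by the previous step; since a $C_0$-semigroup is strongly continuous uniformly on compact sets (cover $\overline{K}$ by finitely many balls and use the contraction bound), we have $\sup_{x\in\overline{K}}|(T_h-I)x|\to0$ as $h\to0^+$. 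Hence $\sup_{f\in\mathcal{D}}|Rf(t_2)-Rf(t_1)|\le\omega(t_2-t_1)+\sup_{x\in\overline{K}}|(T_{t_2-t_1}-I)x|\to0$ as $t_2-t_1\to0$, which is the desired equicontinuity. With (i) and (ii) established, Arzel\`{a}--Ascoli gives that $\mathcal{Y}=R(\mathcal{D})$ is relatively compact in $C([0,T],H)$.

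The main obstacle is the equicontinuity estimate: the pointwise strong continuity of the semigroup must be upgraded to a bound uniform in $f\in\mathcal{D}$, and this is precisely where the relative compactness of the \emph{full} image $K$ (rather than of a single slice) is indispensable, since uniform strong continuity is available only on compact sets. Uniform integrability enters twice and essentially — to control the short-time tails $\int_{t-\eta}^t$ and $\int_{t_1}^{t_2}$ — while the smoothing/compactness of $T_\eta$ is what converts the mere $L^1$-bound on $\mathcal{D}$ into genuine precompactness in $H$.
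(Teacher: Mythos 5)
Your proof is correct, and there is in fact nothing to compare it against inside the paper: the lemma is stated there without proof, being quoted from R\"ockner and Zhang \cite{Rockner-Zhang} (Proposition 5.4), so your argument supplies exactly what the paper omits. The route you take --- writing $Rf(t)=T_\eta\,Rf(t-\eta)+\int_{t-\eta}^{t}T_{t-s}f(s)\,ds$ so that the compact operator $T_\eta$ converts the uniform $L^1$-bound into precompactness in $H$ while uniform integrability controls the short-time tail, then upgrading strong continuity of the semigroup to uniform continuity on the compact set $\overline{K}$ to obtain equicontinuity, and closing with the vector-valued Arzel\`a--Ascoli theorem --- is the standard proof of this statement and is in the same spirit as the argument in the cited reference. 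Two cosmetic remarks: your sets $K$ and $\mathcal{B}$ are literally the same set, which is harmless since only the \emph{boundedness} of $\mathcal{B}$ (not any compactness of it) is used before applying $T_\eta$, so the apparent circularity is not one; and you correctly invoke compactness of $T_\eta$ only for $\eta>0$, whereas the paper's passing claim that $T_t$, $t\geq 0$, are compact is false at $t=0$.
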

\vskip 0.5cm

Denote $\mathcal{G}_0:\ L^2(\vartheta_T)\to C([0,T],H)\cap L^2([0,T],V)$ by
\begin{eqnarray}\label{Eq G 0}
\mathcal{G}_0(\psi)=\eta\ \text{if\ for\ }\psi\in L^2(\vartheta_T),\ \text{where} \ \eta\ \text{solves\ }(\ref{Eq S I}).
\end{eqnarray}

\begin{prop}\label{Prop 01}
Suppose that Conditions A and B hold. Fix $\Upsilon\in(0,\infty)$ and $g^\e,g\in B_2(\Upsilon)$ such that
$g^\e\to g$. Then $\mathcal{G}_0(g^\e)\to\mathcal{G}_0(g)\ \text{in}\ C([0,T],H)\cap L^2([0,T],V)$.
\end{prop}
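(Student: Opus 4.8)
Throughout write $\eta^\e=\mathcal{G}_0(g^\e)$, $\eta=\mathcal{G}_0(g)$ and $w^\e=\eta^\e-\eta$, and recall that by definition of $B_2(\Upsilon)$ the convergence $g^\e\to g$ is in the \emph{weak} topology of $L^2(\vartheta_T)$. The plan is first to record uniform a priori bounds. Since $u^0$ is fixed, equation \eqref{Eq S I} is linear in $\eta$, so testing it against $\eta$ and using $b(u^0,\eta,\eta)=0$ from \eqref{b2} together with \eqref{b4} (to bound $|b(\eta,u^0,\eta)|\le\frac{\nu}{4}\|\eta\|^2+c|\eta|^2\|u^0\|^2|u^0|^2$) and Cauchy--Schwarz on the forcing term (controlled by $\|\psi(\cdot,t)\|_{L^2(\vartheta)}(1+|u^0(t)|)\|M_G\|_{L^2(\vartheta)}$ via \eqref{eq G M}), a Gronwall argument using \eqref{Es u 0} yields $\sup_t|\eta(t)|^2+\int_0^T\|\eta(t)\|^2dt\le C(\Upsilon,f,T,u_0)$, uniformly over $\psi\in B_2(\Upsilon)$. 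This gives well-posedness and a uniform bound on $\{\eta^\e\}$. Subtracting the two copies of \eqref{Eq S I} and using bilinearity of $B$, the difference solves $\frac{d}{dt}w^\e=-Aw^\e-B(w^\e,u^0)-B(u^0,w^\e)+h^\e$ with $w^\e(0)=0$, where $h^\e(t)=\int_{\mathbb{X}}(g^\e-g)(y,t)\,G(u^0(t),y)\,\vartheta(dy)$.

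The main obstacle is the forcing term $h^\e$: the energy identity for $w^\e$ would produce $\int_0^T\langle h^\e,w^\e\rangle\,dt$, which cannot be controlled directly because $g^\e\to g$ only weakly and $w^\e$ itself varies with $\e$. I would resolve this by decoupling. First observe that $\{h^\e\}$ is bounded in $L^2([0,T],H)$, since $|h^\e(t)|\le\|(g^\e-g)(\cdot,t)\|_{L^2(\vartheta)}(1+|u^0(t)|)\|M_G\|_{L^2(\vartheta)}$ and $\|g^\e-g\|_2\le 2\Upsilon$. Moreover $h^\e\to0$ weakly in $L^2([0,T],H)$: for any $\phi\in L^2([0,T],H)$ the map $(y,t)\mapsto\langle G(u^0(t),y),\phi(t)\rangle$ lies in $L^2(\vartheta_T)$ (again by \eqref{eq G M} and \eqref{Es u 0}), so $\int_0^T\langle h^\e(t),\phi(t)\rangle\,dt=\int_{\mathbb{X}_T}(g^\e-g)\langle G(u^0,\cdot),\phi\rangle\,d\vartheta_T\to0$ by the weak convergence of $g^\e$.

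Next set $\rho^\e=R(h^\e)$, the mild solution of $\dot\rho^\e=-A\rho^\e+h^\e$, $\rho^\e(0)=0$. Since $\{h^\e\}$ is bounded in $L^2([0,T],H)$ it is uniformly integrable in $L^1([0,T],H)$, so Lemma \ref{lem-thm2-01} makes $\{\rho^\e\}$ relatively compact in $C([0,T],H)$. As $R$ is a bounded linear operator on $L^2([0,T],H)$, weak continuity forces every limit point to equal the weak limit $0$; hence $\rho^\e\to0$ in $C([0,T],H)$, and in particular in $L^2([0,T],H)$. The energy identity $\nu\int_0^T\|\rho^\e\|^2dt=\int_0^T\langle h^\e,\rho^\e\rangle\,dt-\tfrac12|\rho^\e(T)|^2$ then upgrades this to $\rho^\e\to0$ in $L^2([0,T],V)$, because the right-hand side is bounded by $\|h^\e\|_{L^2([0,T],H)}\|\rho^\e\|_{L^2([0,T],H)}+\tfrac12|\rho^\e(T)|^2\to0$. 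This strong convergence of $\rho^\e$ is the crux; it is what the mere weak convergence of $g^\e$ is made to yield, via the smoothing of $R$ and the compactness lemma.

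Finally decompose $w^\e=\rho^\e+\zeta^\e$, where $\zeta^\e$ solves $\dot\zeta^\e=-A\zeta^\e-B(\zeta^\e,u^0)-B(\rho^\e,u^0)-B(u^0,\zeta^\e)-B(u^0,\rho^\e)$, $\zeta^\e(0)=0$. Testing against $\zeta^\e$, the term $b(u^0,\zeta^\e,\zeta^\e)$ vanishes by \eqref{b2}; the term $b(\zeta^\e,u^0,\zeta^\e)$ is absorbed as above; and the two cross terms $b(\rho^\e,u^0,\zeta^\e)$ and $b(u^0,\rho^\e,\zeta^\e)$ are estimated by \eqref{b4} and Young to give, after absorbing a fraction of $\nu\|\zeta^\e\|^2$, a remainder bounded by $c\,\|u^0\||u^0|\,\|\rho^\e\||\rho^\e|$. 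Integrating in time, this remainder is at most $c\,\sup_t|\rho^\e|\,(\int_0^T\|\rho^\e\|^2)^{1/2}(\int_0^T\|u^0\|^2)^{1/2}\to0$ by the previous step. A Gronwall argument (with integrable weight $\int_0^T\|u^0\|^2|u^0|^2dt<\infty$ from \eqref{Es u 0}) then gives $\sup_t|\zeta^\e(t)|^2+\int_0^T\|\zeta^\e\|^2dt\to0$, i.e. $\zeta^\e\to0$ in $C([0,T],H)\cap L^2([0,T],V)$. Combining with $\rho^\e\to0$ in the same space yields $w^\e\to0$, which is the claim.
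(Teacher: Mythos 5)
Your proof is correct and takes essentially the same route as the paper: by linearity of the heat equation your $\rho^\e=R(h^\e)$ coincides with the paper's $Z^\e-Z$ and your $\zeta^\e$ with its $\overline{L^\e}$, so both arguments hinge on Lemma \ref{lem-thm2-01} to turn weak convergence of the forcing into strong convergence of the semigroup part, followed by the same energy/Gronwall estimates (via \eqref{b2}, \eqref{b4}, \eqref{b6}) on the linearized Navier--Stokes remainder. The only difference is packaging: you subtract the two solutions first and then decompose the difference, while the paper decomposes $\mathcal{G}_0(g^\e)$ and $\mathcal{G}_0(g)$ separately and then subtracts.
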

\begin{proof}
Set \[f^\e(t)=\int_\mathbb{X}g^\e(y,t)G(u^0(t),y)\vartheta(dy), \qquad t\in[0,T].\]
By (\ref{eq G M}), we have
\begin{eqnarray*}
    \int_0^T\int_\mathbb{X}|G(u^0(t),y)|^2\vartheta(dy)dt
&\leq&
    \int_\mathbb{X}M^2_G(y)\vartheta(dy)\int_0^T(1+|u^0(t)|)^2dt\\
&\leq&
2T\sup_{t\in[0,T]}(1+|u^0(t)|^2)\int_\mathbb{X}M^2_G(y)\vartheta(dy)\\
&<&
\infty,
\end{eqnarray*}
and hence, for every $v\in H$, $\< G(u^0(t),y), v\>\in
L^2(\vartheta_T)$. Combining $g^\e\to g$ in the weak topology on
$L^2(\vartheta_T)$, we get
\begin{eqnarray}\label{Eq 01}
\lim_{\e\to 0}\< \int_0^tf^\e(s)ds, v\>=\<
\int_0^t\int_\mathbb{X}g(y,s)G(u^0(s),y)\vartheta(dy)ds, v\>,\
\ \forall v\in H,\ \forall t\in[0,T].
\end{eqnarray}

Denote $\mathcal{D}=\{f^\e,\ \e>0\}$. Since, for every measurable subset $O\subset[0,T]$
\begin{eqnarray}\label{Eq I-1}
    \int_O|f^\e(t)| dt
&\leq&
    \int_O\int_\mathbb{X}|g^\e(y,t)||G(u^0(t),y)| \vartheta(dy)dt\nonumber\\
&\leq&
    \Big(\int_0^T\int_\mathbb{X}|g^\e(y,t)|^2\vartheta(dy)dt\Big)^{1/2} \Big(\int_O\int_\mathbb{X}|G(u^0(t),y)|^2 \vartheta(dy)dt\Big)^{1/2}\nonumber\\
&\leq&
    \Upsilon\Big(\int_\mathbb{X}M(y)^2\vartheta(dy)\int_O(1+|u^0(t)| )^2dt\Big)^{1/2}\nonumber\\
&\leq&
    \Upsilon\sup_{t\in[0,T]}(1+|u^0(t)|)\sqrt{\lambda_T(O)},
\end{eqnarray}
we see that the family $\mathcal{D}\subset L^1([0,T],H)$  is uniformly integrable in $L^1([0,T],H)$. Therefore,
by Lemma \ref{lem-thm2-01}, $\{Z^\e,\ \e>0\}$ is relatively compact in $C([0,T],H)$, here $Z^\e$ satisfies
\begin{eqnarray*}
dZ^\e(t)=-AZ^\e(t)dt+f^\e(t)dt,\ \ t\in[0,T],
\end{eqnarray*}
with initial value $Z^\e(0)=0$.

Let $Z$ be any limit point of $\{Z^\e,\ \e>0\}$ in $C([0,T],H)$. Combining with (\ref{Eq 01}), we have
\begin{eqnarray*}
\< Z(t),v\>=-\int_0^t\< Z(s),Av\> ds+\< \int_0^t\int_\mathbb{X}g(y,s)G(u^0(s),y)\vartheta(dy)ds, v\> ,\ \ \forall v\in D(A).
\end{eqnarray*}
This implies that $Z$ is the unique solution of the following equation
\begin{eqnarray*}
\left\{
 \begin{array}{lll}
 & \hbox{ $dZ(t)=-AZ(t)dt+\int_\mathbb{X}g(y,t)G(u^0(t),y)\vartheta(dy)dt,\ \ t\in[0,T]$;} \\
 & \hbox{$\ \ Z(0)=0$.}
 \end{array}
\right.
\end{eqnarray*}

Denote $\overline{Z^\e}(t)=Z^\e(t)-Z(t)$. Notice that (\ref{Eq I-1}) also holds for \[f(t)=\int_\mathbb{X}g(y,t)G(u^0(t),y)\vartheta(dy)\] and $\sup_{s\in[0,T]}|\overline{Z^\e}(s)|\to 0,\ as\ \e\to 0$,
we obtain
\begin{eqnarray}\label{eq Z}
& &|\overline{Z^\e}(t)|^2+2\nu\int_0^t\|\overline{Z^\e}(s)\|^2ds\nonumber\\
&=&
2\int_0^t\< \overline{Z^\e}(s),\int_\mathbb{X}(g^\e(y,s)-g(y,s))G(u^0(s),y)\vartheta(dy)\> ds\nonumber\\
&\leq&
2\sup_{s\in[0,T]}|\overline{Z^\e}(s)|\nonumber\\
   & & \times \Big[
    \int_0^T\int_\mathbb{X}|g^\e(y,s)||G(u^0(s),y)| \vartheta(dy)ds
      +
    \int_0^T\int_\mathbb{X}|g(y,s)||G(u^0(s),y)| \vartheta(dy)ds
   \Big]\nonumber\\
&\leq&4\Upsilon\sup_{t\in[0,T]}(1+|u^0(t)|)\sqrt{T}\sup_{s\in[0,T]}|\overline{Z^\e}(s)|\to 0,\ as\ \e\to 0.
\end{eqnarray}

Set \[L^\e(t)=\mathcal{G}_0(g^\e)(t)-Z^\e(t) \mbox{ and }L(t)=\mathcal{G}_0(g)(t)-Z(t),\] and denote $\overline{L^\e}(t)=L^\e(t)-L(t)$. Then
\begin{eqnarray*}
\left\{
 \begin{array}{lll}
 & \hbox{ $d\overline{L^\e}(t)=-A\overline{L^\e}(t)dt-B(\overline{L^\e}(t)+\overline{Z^\e}(t),u^0(t))dt-B(u^0(t),\overline{L^\e}(t)+\overline{Z^\e}(t))dt$;} \\
 & \hbox{$\overline{L^\e}(0)=0$.}
 \end{array}
\right.
\end{eqnarray*}
We have
\begin{eqnarray}\label{eq L}
&&|\overline{L^\e}(t)|^2+2\nu\int_0^t\|\overline{L^\e}(s)\|^2ds\nonumber\\
&=&
 -2\int_0^t\left ( B(\overline{L^\e}(s)+\overline{Z^\e}(s),u^0(s)), \overline{L^\e}(s)\right ) ds\nonumber\\
&&-
 2\int_0^t\left ( B(u^0(s),\overline{L^\e}(s)+\overline{Z^\e}(s)), \overline{L^\e}(s)\right ) ds\nonumber\\
 &=&
 2\int_0^t\left ( B(\overline{L^\e}(s),\overline{L^\e}(s)), u^0(s)\right ) ds\nonumber\\
 &&-
 2\int_0^t\left ( B(\overline{Z^\e}(s),u^0(s)), \overline{L^\e}(s)\right ) ds\nonumber\\
&&-
 2\int_0^t\left ( B(u^0(s),\overline{Z^\e}(s)), \overline{L^\e}(s)\right ) ds\nonumber\\
 &=&
 I_1(t)+I_2(t)+I_3(t).
\end{eqnarray}

By (\ref{b6}) and (\ref{L4}),
\begin{eqnarray}\label{eq I01}
|I_1(t)|
&\leq&
2\int_0^t\Big|\left ( B(\overline{L^\e}(s),\overline{L^\e}(s)), u^0(s)\right )\Big|ds\nonumber\\
&\leq&
 \nu\int_0^t\|\overline{L^\e}(s)\|^2ds
+
 \frac{128}{\nu^3}\sup_{s\in[0,T]}|u^0(s)|^2\int_0^t\|u^0(s)\|^2|\overline{L^\e}(s)|^2ds.
\end{eqnarray}

By (\ref{b4}) and (\ref{Es u 0}),
\begin{eqnarray}\label{eq I02}
|I_2(t)|
&\leq&
2\int_0^t\Big|\left ( B(\overline{Z^\e}(s),u^0(s)), \overline{L^\e}(s)\right )\Big|ds\nonumber\\
&\leq&
4\int_0^t|\overline{Z^\e}(s)|^{1/2}\|\overline{Z^\e}(s)\|^{1/2}|u^0(s)|^{1/2}\|u^0(s)\|^{1/2}\|\overline{L^\e}(s)\|ds\nonumber\\
&\leq&
4\sup_{s\in[0,T]}|\overline{Z^\e}(s)|^{1/2}\sup_{s\in[0,T]}|u^0(s)|^{1/2}\int_0^t\|\overline{Z^\e}(s)\|^{1/2}\|u^0(s)\|^{1/2}\|\overline{L^\e}(s)\|ds\nonumber\\
&\leq&
C\sup_{s\in[0,T]}|\overline{Z^\e}(s)|^{1/2}
\Big[
   \int_0^t\|\overline{L^\e}(s)\|^2ds
     +
   \int_0^t\|\overline{Z^\e}(s)\|\|u^0(s)\|ds\nonumber
\Big]\\
&\leq&
C\sup_{s\in[0,T]}|\overline{Z^\e}(s)|^{1/2}
\Big[
   \int_0^t\|\overline{L^\e}(s)\|^2ds
     +
   C\Big(\int_0^t\|\overline{Z^\e}(s)\|^2ds\Big)^{1/2}
\Big].
\end{eqnarray}
Similar to (\ref{eq I02}), we have
\begin{eqnarray}\label{eq I03}
|I_3(t)|
\leq
C\sup_{s\in[0,T]}|\overline{Z^\e}(s)|^{1/2}
\Big[
   \int_0^t\|\overline{L^\e}(s)\|^2ds
     +
   C\Big(\int_0^t\|\overline{Z^\e}(s)\|^2ds\Big)^{1/2}
\Big].
\end{eqnarray}
Combining (\ref{eq L})--(\ref{eq I03}), we get
\begin{eqnarray}\label{eq LL}
&&|\overline{L^\e}(t)|^2+(\nu-C\sup_{s\in[0,T]}|\overline{Z^\e}(s)|^{1/2})\int_0^t\|\overline{L^\e}(s)\|^2ds\\
&\leq&
\frac{128}{\nu^3}\sup_{s\in[0,T]}|u^0(s)|^2\int_0^t\|u^0(s)\|^2|\overline{L^\e}(s)|^2ds
+
C\sup_{s\in[0,T]}|\overline{Z^\e}(s)|^{1/2}\Big(\int_0^T\|\overline{Z^\e}(s)\|^2ds\Big)^{1/2}.\nonumber
\end{eqnarray}
By (\ref{Es u 0}), (\ref{eq Z}) and using Gronwall's lemma,
\begin{eqnarray}\label{eq LLL}
\lim_{\e\to 0}\left \{\sup_{t\in[0,T]}|\overline{L^\e}(t)|^2+\int_0^T\|\overline{L^\e}(t)\|^2dt\right \}=0.
\end{eqnarray}
Recall \[L^\e(t)=\mathcal{G}_0(g^\e)(t)-Z^\e(t)\mbox{ and }L(t)=\mathcal{G}_0(g)(t)-Z(t).\]
(\ref{eq Z}) and (\ref{eq LLL}) yield that
\begin{eqnarray*}
\lim_{\e\to 0}\left \{\sup_{t\in[0,T]}|\mathcal{G}_0(g^\e)(t)-\mathcal{G}_0(g)(t)|^2
+\int_0^T\|\mathcal{G}_0(g^\e)(t)-\mathcal{G}_0(g)(t)\|^2dt\right\}=0.
\end{eqnarray*}

\end{proof}

Finally, we proceed to verifying Condition MDP-2.
Recall the definition of $\mathcal{U}^M_{+,\e}$ in $(\ref{eq u})$. We note that for every $\varphi^\e\in\mathcal{U}^M_{+,\e}$, there exists
unique process $X^\e\in D([0,T],H)\cap L^2([0,T],V)$ that solves the following equation
\[\left\{
 \begin{array}{ccl}
  dX^\e(t)&=&-AX^\e(t)dt-B(X^\e(t))dt+f(t)dt+\int_{\mathbb{X}}\e G(X^\e(t-),y)\widetilde{N}^{\e^{-1}\varphi^\e}(dydt)\\
 & & +\int_{\mathbb{X}}G(X^\e(t),y)(\varphi^\e(y,t)-1)\vartheta(dy)dt; \\
 X^\e(0)&=&u_0.
 \end{array}\right.\]

The following Lemmas \ref{lem 4.2}-\ref{lem 4.7} were proved in \cite{Budhiraja-Dupuis-Ganguly}. We refer the reader to \cite{Budhiraja-Dupuis-Ganguly} for details.
\begin{lem}\label{lem 4.2}
Let $h\in L^2(\vartheta)\cap\mathcal{H}$ and fix $M>0$.
Then there exists $\varsigma_h>0$ such that for any measurable subset $I$ of [0,T] and for all $\e>0$,
\begin{eqnarray}\label{eq lem 4.2}
\sup_{\varphi\in\mathcal{S}_{+,\e}^M}\int_{\mathbb{X}\times I}h^2(y)\varphi(y,s)\vartheta(dy)ds
\leq
\varsigma_h(a^2(\e)+\lambda_T(I)).
\end{eqnarray}
\end{lem}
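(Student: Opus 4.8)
The plan is to dominate the integrand $h^2\varphi$ pointwise by a sum of $l(\varphi)$ — which is controlled by the defining constraint of $\mathcal{S}^M_{+,\e}$ — and a term depending only on $h$, via the Fenchel--Young inequality associated with the entropy function $l$. Computing the Legendre transform of $l(r)=r\log r-r+1$ on $[0,\infty)$, one finds
\[
l^*(p)=\sup_{r\geq 0}\big(pr-l(r)\big)=e^p-1,
\]
the supremum being attained at $r=e^p$. Fenchel--Young then gives, for every $p\in\mathbb{R}$ and every $r\geq 0$, the inequality $pr\leq l(r)+e^p-1$. Substituting $p=\sigma h^2(y)$ and $r=\varphi(y,s)$ for a parameter $\sigma\in(0,1]$ to be fixed, and dividing by $\sigma$, yields the pointwise estimate
\[
h^2(y)\,\varphi(y,s)\leq \frac{1}{\sigma}\,l\big(\varphi(y,s)\big)+\frac{1}{\sigma}\big(e^{\sigma h^2(y)}-1\big).
\]

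Integrating this over $\mathbb{X}\times I$, I would treat the two terms separately. Since $l\geq 0$, the first term is bounded by $\frac{1}{\sigma}\int_{\mathbb{X}_T}l(\varphi)\,\vartheta_T=\frac{1}{\sigma}L_T(\varphi)\leq \frac{M}{\sigma}a^2(\e)$, uniformly over $\varphi\in\mathcal{S}^M_{+,\e}$; this is the only place $\varphi$ enters, and the bound is exactly the defining constraint $L_T(\varphi)\leq Ma^2(\e)$. The second term factorizes, because its integrand is independent of $s$, as $\frac{\lambda_T(I)}{\sigma}\int_{\mathbb{X}}\big(e^{\sigma h^2(y)}-1\big)\,\vartheta(dy)$. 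It therefore remains only to show that $C_h:=\int_{\mathbb{X}}\big(e^{\sigma h^2}-1\big)\,\vartheta(dy)<\infty$ for a suitable $\sigma$; granting this, the claim follows with $\varsigma_h=\max(M,C_h)/\sigma$.

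The crux is the finiteness of $C_h$, and this is where both hypotheses on $h$ enter; some care is needed because $\vartheta$ is only $\sigma$-finite, so the class $\mathcal{H}$ controls exponential integrals only over sets of finite $\vartheta$-measure. I would split $\mathbb{X}=\{h^2\leq 1\}\cup\{h^2>1\}$. On $\{h^2>1\}$, Markov's inequality and $h\in L^2(\vartheta)$ give $\vartheta(\{h^2>1\})\leq\int_{\mathbb{X}}h^2\,\vartheta(dy)<\infty$, so this region has finite measure; choosing $\sigma\leq\delta$, with $\delta>0$ the constant from the definition of $\mathcal{H}$, one gets $\int_{\{h^2>1\}}(e^{\sigma h^2}-1)\,\vartheta(dy)\leq\int_{\{h^2>1\}}e^{\delta h^2}\,\vartheta(dy)<\infty$. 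On $\{h^2\leq 1\}$ — which may have infinite measure, and where $\mathcal{H}$ is useless — I would instead use the elementary bound $e^x-1\leq(e-1)x$ for $0\leq x\leq 1$ (applicable since $\sigma\leq 1$ forces $\sigma h^2\leq 1$ there), giving $\int_{\{h^2\leq1\}}(e^{\sigma h^2}-1)\,\vartheta(dy)\leq(e-1)\sigma\int_{\mathbb{X}}h^2\,\vartheta(dy)<\infty$, again by $h\in L^2(\vartheta)$. Taking $\sigma=\min(\delta,1)$ makes both pieces finite, so $C_h<\infty$ and the proof is complete. The only genuinely delicate point is precisely this interplay forced by the $\sigma$-finiteness of $\vartheta$: $L^2$-integrability handles the small values of $h$ over the possibly infinite bulk of $\mathbb{X}$, while membership in $\mathcal{H}$ handles the exponential growth over the finite-measure region where $h$ is large.
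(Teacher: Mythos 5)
Your proof is correct. Note that the paper does not actually prove this lemma---it is quoted from \cite{Budhiraja-Dupuis-Ganguly}---and your argument reconstructs essentially the standard proof used there: the Fenchel--Young inequality $pr\le l(r)+e^{p}-1$ for the entropy function $l$, applied with $p=\sigma h^2$, followed by the splitting of $\mathbb{X}$ into $\{h^2>1\}$ (finite $\vartheta$-measure by Chebyshev, so membership in $\mathcal{H}$ applies) and $\{h^2\le 1\}$ (where $h\in L^2(\vartheta)$ and the bound $e^{x}-1\le (e-1)x$ on $[0,1]$ suffice), which is exactly the device needed to cope with the $\sigma$-finiteness of $\vartheta$.
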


\begin{lem}\label{lem 4.3}
Let $h\in L^2(\vartheta)\cap\mathcal{H}$ and $I$ be a measurable subset of [0,T]. Fix $M>0$. Then
there exists $\Gamma_h, \rho_h:(0,\infty)\to(0,\infty)$ such that $\Gamma_h(u)\downarrow 0$ as $u\uparrow \infty$, and for all
$\e,\beta\in(0,\infty)$,
$$
\sup_{\psi\in\mathcal{S}^M_\e}\int_{\mathbb{X}\times I}|h(y)\psi(y,s)|1_{\{|\psi|\geq \beta/a(\e)\}}\vartheta(dy)ds
\leq
\Gamma_h(\beta)(1+\sqrt{\lambda_T(I)}),
$$
and
$$
\sup_{\psi\in\mathcal{S}^M_\e}\int_{\mathbb{X}\times I}|h(y)\psi(y,s)|\vartheta(dy)ds
\leq
\rho_h(\beta)\sqrt{\lambda_T(I)}+\Gamma_h(\beta)a(\e).
$$

\end{lem}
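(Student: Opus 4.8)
The plan is to write each $\psi\in\mathcal{S}^M_\e$ as $\psi=(\varphi-1)/a(\e)$ with $\varphi\in\mathcal{S}^M_{+,\e}$, so that $\int_{\mathbb{X}_T}l(\varphi)\,\vartheta(dy)ds\le Ma^2(\e)$, and to split the region of integration according to the indicator. Everything rests on two elementary properties of $l(r)=r\log r-r+1=\int_1^r\log s\,ds$: a super-linear lower bound on the far tail, namely $l(r)\ge\tfrac12(r-1)\log(1+\beta/2)$ for $r\ge 1+\beta$ (integrate $\log s$ over $[1+\beta/2,r]$ and use $r-1-\beta/2\ge\tfrac12(r-1)$); and a quadratic lower bound on bounded sets, namely that for each $\beta>0$ there is $c_\beta>0$ with $l(r)\ge c_\beta(r-1)^2$ for all $r\in[0,1+\beta]$ (the ratio $l(r)/(r-1)^2$ is continuous and positive on the compact interval, with limit $\tfrac12$ at $r=1$; note $c_\beta\downarrow 0$ as $\beta\uparrow\infty$). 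Since $\Gamma_h$ need only decrease to $0$ at infinity, I may run the main argument for $\beta\ge 2$, where $\{|\psi|\ge\beta/a(\e)\}=\{|\varphi-1|\ge\beta\}=\{\varphi\ge 1+\beta\}$ because $\varphi\ge 0$ forces $\{\varphi\le 1-\beta\}=\emptyset$; small $\beta$ is then absorbed by passing to a decreasing majorant of $\Gamma_h$.

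For the first (tail) inequality, on $A:=\{\varphi\ge 1+\beta\}\cap(\mathbb{X}\times I)$ I have $|h\psi|=a(\e)^{-1}|h|(\varphi-1)$, and Cauchy--Schwarz gives
\[
\int_A|h\psi|\,\vartheta(dy)ds\le a(\e)^{-1}\Big(\int_A h^2\varphi\,\vartheta(dy)ds\Big)^{1/2}\Big(\int_A\tfrac{(\varphi-1)^2}{\varphi}\,\vartheta(dy)ds\Big)^{1/2}.
\]
The first factor is controlled by Lemma \ref{lem 4.2}: $\int_A h^2\varphi\le\varsigma_h(a^2(\e)+\lambda_T(I))$. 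For the second factor I use $(\varphi-1)^2/\varphi\le\varphi$ (valid since $\varphi\ge 1$), the super-linear bound in the form $(\varphi-1)\le\tfrac{2}{\log(1+\beta/2)}l(\varphi)$ on $A$, and $\int_A\vartheta(dy)ds\le l(1+\beta)^{-1}\int l(\varphi)$, which together give $\int_A\varphi\le\int_A(\varphi-1)+\int_A\vartheta(dy)ds\le\kappa(\beta)a^2(\e)$ with $\kappa(\beta):=2M/\log(1+\beta/2)+M/l(1+\beta)\downarrow 0$. The two powers of $a(\e)$ then cancel the prefactor $a(\e)^{-1}$, leaving $\int_A|h\psi|\le\sqrt{\varsigma_h\kappa(\beta)}\,(a^2(\e)+\lambda_T(I))^{1/2}\le\Gamma_h(\beta)\big(a(\e)+\sqrt{\lambda_T(I)}\big)$ with $\Gamma_h(\beta):=\sqrt{\varsigma_h\kappa(\beta)}\to 0$. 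As $a(\e)$ is bounded under (\ref{h}), this yields the stated bound $\Gamma_h(\beta)(1+\sqrt{\lambda_T(I)})$.

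For the second inequality I add the bulk contribution on $A^c\cap(\mathbb{X}\times I)=\{\varphi\in[0,1+\beta)\}$, where $|h\psi|=a(\e)^{-1}|h||\varphi-1|$ and Cauchy--Schwarz gives $\int_{A^c}|h\psi|\le a(\e)^{-1}\big(\int h^2\,\vartheta(dy)ds\big)^{1/2}\big(\int_{A^c}(\varphi-1)^2\,\vartheta(dy)ds\big)^{1/2}$. Here $\int_{\mathbb{X}\times I}h^2\,\vartheta(dy)ds=\lambda_T(I)\|h\|_{L^2(\vartheta)}^2$, and by the quadratic lower bound on $[0,1+\beta]$, $\int_{A^c}(\varphi-1)^2\le c_\beta^{-1}\int l(\varphi)\le c_\beta^{-1}Ma^2(\e)$; again $a(\e)^{-1}$ cancels, leaving $\int_{A^c}|h\psi|\le\|h\|_{L^2(\vartheta)}\sqrt{M/c_\beta}\,\sqrt{\lambda_T(I)}=:\rho_h(\beta)\sqrt{\lambda_T(I)}$. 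Combining with the tail bound $\Gamma_h(\beta)(a(\e)+\sqrt{\lambda_T(I)})$ and absorbing the term $\Gamma_h(\beta)\sqrt{\lambda_T(I)}$ into $\rho_h$ produces $\rho_h(\beta)\sqrt{\lambda_T(I)}+\Gamma_h(\beta)a(\e)$, as required.

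I expect the tail estimate to be the main obstacle. A priori $|\psi|$ is only bounded by $O(1/a(\e))$ and $h$ is unbounded on $\mathbb{X}$, so a naive Young or entropy splitting of $|h\psi|$ produces an uncontrollable factor $a(\e)^{-1}$. The decisive point is to extract the correct power of $a(\e)$: the super-linearity of $l$ forces the weighted tail mass $\int_A\varphi$ to be of order $a^2(\e)/\log\beta$, and pairing this, via Cauchy--Schwarz, with Lemma \ref{lem 4.2} (rather than with a crude exponential-moment bound on $h$) supplies the second factor $a(\e)$ that cancels the prefactor while simultaneously generating the $\sqrt{\lambda_T(I)}$ dependence and the decay in $\beta$.
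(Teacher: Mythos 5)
Your proof is correct. Note that the paper itself does not prove this lemma at all: it is quoted verbatim from \cite{Budhiraja-Dupuis-Ganguly} with the remark that the proofs can be found there, and your argument is a faithful reconstruction of the strategy used in that reference --- split according to the indicator, use the superlinear growth of $l$ on $\{\varphi\geq 1+\beta\}$ together with Lemma \ref{lem 4.2} via Cauchy--Schwarz to gain the factor $a(\e)$ that cancels the prefactor $a(\e)^{-1}$ and produces the decay in $\beta$, and use the quadratic lower bound $l(r)\geq c_\beta(r-1)^2$ on $[0,1+\beta]$ for the bulk term. Two small points you should make explicit: the replacement of $a(\e)+\sqrt{\lambda_T(I)}$ by $1+\sqrt{\lambda_T(I)}$ in the first inequality uses boundedness of $a(\e)$, which is implicit in the scaling assumption (\ref{h}) (and in the statement of the lemma itself); and the patch for $\beta<2$, bounding the truncated integral by the full integral and taking a constant value of $\Gamma_h$ there, is legitimate precisely because $\Gamma_h$ is only required to decrease to $0$ at infinity, but the resulting $\Gamma_h$ should be replaced by its nonincreasing majorant $\sup_{u\geq\beta}\Gamma_h(u)$ to meet the monotonicity requirement.
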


\begin{lem}\label{lem 4.7}
Let $h\in L^2(\vartheta)\cap\mathcal{H}$ be positive. Then for any
$\beta>0$,
\begin{eqnarray}\label{eq lem 4.7}
\lim_{\e\to 0}\sup_{\psi\in\mathcal{S}^M_\e}\int_{\mathbb{X}\times [0,T]}|h(y)\psi(y,s)|1_{\{|\psi|>\beta/a(\e)\}}\vartheta(dy)ds
=
0.
\end{eqnarray}
\end{lem}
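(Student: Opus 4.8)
The plan is to verify the limit by reducing, via positivity of $h$ and the substitution $\psi=(\varphi-1)/a(\e)$, to a sum of two integrals over complementary regions of values of $\varphi$. Since $\{|\psi|>\beta/a(\e)\}=\{|\varphi-1|>\beta\}$, the domain splits into an \emph{upper} region $\{\varphi>1+\beta\}$ and a \emph{lower} region $\{\varphi<1-\beta\}$ (the latter is empty unless $\beta<1$, because $\varphi\ge0$ forces $|\varphi-1|\le1$ there). The key structural fact I would exploit is that the convex function $l(r)=r\log r-r+1$ vanishes only at $r=1$ and is bounded below by a strictly positive constant $c_\beta$ off the interval $[1-\beta,1+\beta]$; hence the constraint $L_T(\varphi)\le Ma^2(\e)$ forces the $\vartheta_T$-measure of each region to be at most $Ma^2(\e)/c_\beta$, which tends to $0$ like $a^2(\e)$, uniformly over $\varphi\in\mathcal{S}^M_{+,\e}$.

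On the lower region the argument is elementary and uses only $h\in L^2(\vartheta)$. There $|\varphi-1|\le1$, so the contribution is bounded by $a(\e)^{-1}\int h\,1_{\{\varphi<1-\beta\}}\,\vartheta_T$. Truncating $h$ at a level $R$, the bounded part is at most $R\,a(\e)^{-1}\vartheta_T(\{\varphi<1-\beta\})=O(Ra(\e))$ (the measure appears to the first power, and dividing by $a(\e)$ still leaves one factor $a(\e)$), while the tail part is controlled by Cauchy--Schwarz by $\sqrt{T}\,\|h\,1_{\{h>R\}}\|_{L^2(\vartheta)}\,a(\e)^{-1}\,\vartheta_T(\{\varphi<1-\beta\})^{1/2}=O\big(\|h\,1_{\{h>R\}}\|_{L^2(\vartheta)}\big)$. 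Choosing $R$ large first and then letting $\e\to0$ makes both pieces arbitrarily small.

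The main obstacle is the upper region, where $\varphi-1$ is unbounded, so truncating $\varphi$ crudely fails. Here I would invoke the Fenchel--Young inequality associated with the Legendre pair $l$ and $l^*(p)=e^p-1$, namely $h\varphi\le\sigma^{-1}l(\varphi)+\sigma^{-1}(e^{\sigma h}-1)$ for every $\sigma>0$. The $l(\varphi)$ term integrates to at most $\sigma^{-1}a(\e)^{-1}Ma^2(\e)=O(a(\e))$. For the term $e^{\sigma h}-1$ I would again truncate $h$ at a level $R$, now allowed to grow with $\e$: on $\{h\le R\}$ the integrand is at most $e^{\sigma R}$ while the region has measure $O(a^2(\e))$, giving a bound $O(e^{\sigma R}a(\e))$; on $\{h>R\}$ the contribution is at most $\sigma^{-1}a(\e)^{-1}T\,\tau_\sigma(R)$, where $\tau_\sigma(R)=\int_{\{h>R\}}(e^{\sigma h}-1)\,\vartheta$. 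This is precisely where Condition B enters: the super-exponential integrability afforded by $h\in\mathcal{H}$, applied on the finite-measure set $\{h>R\}$, yields after completing the square and a Chebyshev estimate a Gaussian tail bound $\tau_\sigma(R)\le Ce^{-c_1R^2}$ with $c_1>0$. Taking $R=R(\e)=\sqrt{(2/c_1)\log(1/a(\e))}$ then makes $e^{-c_1R^2}=a(\e)^2$, so the tail part is $O(a(\e))$, while $e^{\sigma R(\e)}=\exp\big(\sigma\sqrt{(2/c_1)\log(1/a(\e))}\big)$ grows slower than any negative power of $a(\e)$, so the bounded part $O(e^{\sigma R}a(\e))\to0$. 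The delicate point, and the reason plain exponential integrability of $h$ would not suffice, is exactly this balancing: one needs the $e^{-c_1R^2}$ decay to beat the $e^{\sigma R}$ growth against the reciprocal factor $a(\e)^{-1}$, which is guaranteed only by the Gaussian (rather than merely exponential) integrability built into the class $\mathcal{H}$.
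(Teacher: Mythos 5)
Your proof is correct, but there is nothing in the paper to compare it against: the authors do not prove Lemma \ref{lem 4.7} (nor Lemmas \ref{lem 4.2}--\ref{lem 4.3}) at all --- they state that these lemmas were proved in \cite{Budhiraja-Dupuis-Ganguly} and refer the reader there. So your argument stands as a self-contained reconstruction of the cited result, and it holds up. The key checkpoints all verify: $\{|\psi|>\beta/a(\e)\}=\{|\varphi-1|>\beta\}$; the entropy constraint $L_T(\varphi)\le Ma^2(\e)$ together with $l(r)\ge c_\beta:=\min\{l(1-\beta),l(1+\beta)\}>0$ off $[1-\beta,1+\beta]$ gives the measure bound $\vartheta_T(\{|\varphi-1|>\beta\})\le Ma^2(\e)/c_\beta$, uniformly over $\mathcal{S}^M_{+,\e}$; the lower region indeed needs only $h\in L^2(\vartheta)$; the convex conjugate of $l$ is indeed $l^*(p)=e^p-1$, so your Fenchel--Young step $\sigma h\varphi\le l(\varphi)+e^{\sigma h}-1$ is legitimate; and the Gaussian tail bound $\int_{\{h>R\}}e^{\sigma h}\,d\vartheta\le Ce^{-c_1R^2}$ is available because $\{h>R_0\}$ has finite $\vartheta$-measure (Chebyshev with $h\in L^2(\vartheta)$), so the class-$\mathcal{H}$ integrability applies on it. Your balancing choice $R(\e)=\sqrt{(2/c_1)\log(1/a(\e))}$ then makes both pieces $o(1)$, and since every estimate depends only on $M,\beta,\sigma,\delta,T,h$, the convergence is uniform over $\mathcal{S}^M_\e$ as the lemma demands.

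One remark on economy: the sliding truncation $R(\e)$ can be avoided. On the upper region, after Fenchel--Young, split $h$ at a \emph{fixed} level $K$; the $\{h\le K\}$ part is $O(e^{\sigma K}a(\e))$ as in your argument, while on $\{h>K\}$ Cauchy--Schwarz against $\vartheta_T(\{|\varphi-1|>\beta\})^{1/2}=O(a(\e))$ combined with $e^{2\sigma h}\le e^{\sigma^2/\delta}e^{\delta h^2}$ bounds that contribution by a constant times $\bigl(\int_{\{h>K\}}e^{\delta h^2}\,d\vartheta\bigr)^{1/2}$, uniformly in $\e$, and this tends to $0$ as $K\to\infty$ by dominated convergence. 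This trades your quantitative balancing of $e^{\sigma R}$ against $e^{-c_1R^2}$ for a softer ``first $K$ large, then $\e\to0$'' argument; both are rigorous, and both make essential use of the Gaussian (not merely exponential) integrability built into $\mathcal{H}$, exactly as you point out.
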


\begin{prop}
There exists an $\e_0>0$ such that
\begin{eqnarray}\label{es X e}
\sup_{\e\in(0,\e_0]}\Big(\mathbb{E}\sup_{t\in[0,T]}|X^\e(t)|^2+\mathbb{E}\int_0^T\|X^\e(t)\|^2dt\Big)
\leq
C_{\e_0}<\infty.
\end{eqnarray}
\end{prop}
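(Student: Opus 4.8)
The plan is to apply the It\^o formula to $|X^\e(t)|^2$, use the coercivity of $A$ in \eqref{A} together with the cancellation $(B(u),u)=b(u,u,u)=0$ from \eqref{b2}, and then absorb every remaining contribution — each of which carries a small factor $\e$ or $a(\e)$ and the random intensity $\varphi^\e$ — into the left-hand side by means of Lemmas \ref{lem 4.2}--\ref{lem 4.3}. To guarantee that the stochastic integral is a genuine (rather than local) martingale, I would first localise with the stopping times $\tau_R=\inf\{t:|X^\e(t)|\ge R\}\wedge T$, prove the estimate on $[0,\tau_R]$ with constants independent of $R$, and let $R\to\infty$ at the end by Fatou's lemma.

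Applying It\^o's formula for the jump process gives, for $t\le\tau_R$,
\[
|X^\e(t)|^2+2\nu\int_0^t\|X^\e(s)\|^2ds=|u_0|^2+2\int_0^t(X^\e(s),f(s))ds+D^\e(t)+E^\e(t)+M^\e(t),
\]
where $D^\e(t)=2\int_0^t\int_{\mathbb{X}}\langle X^\e(s),G(X^\e(s),y)\rangle(\varphi^\e(y,s)-1)\vartheta(dy)ds$ is the control drift, $E^\e(t)=\e\int_0^t\int_{\mathbb{X}}|G(X^\e(s),y)|^2\varphi^\e(y,s)\vartheta(dy)ds$ is the It\^o correction, and $M^\e(t)=\int_0^t\int_{\mathbb{X}}\big(2\e\langle X^\e(s-),G\rangle+\e^2|G|^2\big)\widetilde{N}^{\e^{-1}\varphi^\e}(dyds)$ is the compensated-Poisson martingale. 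Taking the supremum in $t$ and then expectations, the force term is controlled by Young's inequality, $2\int_0^T|(X^\e,f)|ds\le\frac{\nu}{2}\int_0^T\|X^\e\|^2ds+\frac{4}{\nu}\int_0^T\|f\|_{V'}^2ds$, the first piece being absorbed on the left and the second finite by \eqref{eq f}.

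For the two drift-type terms I would use $|G(x,y)|\le M_G(y)(1+|x|)$ from \eqref{eq G M}. Since $|X^\e|(1+|X^\e|)\le 1+2|X^\e|^2$, one gets $\sup_t|D^\e(t)|\le 2(1+2\sup_s|X^\e(s)|^2)\int_0^T\int_{\mathbb{X}}M_G(y)|\varphi^\e-1|\vartheta(dy)ds$; writing $\varphi^\e-1=a(\e)\psi^\e$ with $\psi^\e(\cdot,\cdot,\omega)\in\mathcal{S}^M_\e$ a.s., Lemma \ref{lem 4.3} bounds the last integral pathwise by $a(\e)(\rho_{M_G}(1)\sqrt{T}+\Gamma_{M_G}(1)a(\e))=O(a(\e))$. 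Similarly $\sup_t|E^\e(t)|\le 2\e(1+\sup_s|X^\e(s)|^2)\int_0^T\int_{\mathbb{X}}M_G^2\varphi^\e\vartheta(dy)ds$, and Lemma \ref{lem 4.2} with $h=M_G$ bounds the integral by $\varsigma_{M_G}(a^2(\e)+T)$. Both terms thus contribute a bounded constant plus a coefficient of size $O(a(\e))+O(\e)$ multiplying $\mathbb{E}\sup_s|X^\e(s)|^2$, which by \eqref{h} is smaller than any prescribed fraction once $\e\le\e_0$, and so is absorbed.

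The martingale term is the crux, and the delicate point is that Condition A only provides $M_G\in L^2(\vartheta)$, so $M^\e$ cannot be controlled through the full quadratic variation $\int\!\int H^2\e^{-1}\varphi^\e$, whose quadratic-in-$G$ part would demand $M_G^4\in L^1(\vartheta)$. I would therefore split $M^\e=M^\e_1+M^\e_2$ into the parts linear and quadratic in $G$. For $M^\e_2$ the elementary maximal bound $\mathbb{E}\sup_t|M^\e_2(t)|\le 2\mathbb{E}\int_0^T\int_{\mathbb{X}}\e^2|G|^2\e^{-1}\varphi^\e\vartheta(dy)ds=2\e\,\mathbb{E}\int_0^T\int_{\mathbb{X}}|G|^2\varphi^\e$ suffices, and Lemma \ref{lem 4.2} again renders it $O(\e)(1+\mathbb{E}\sup_s|X^\e|^2)$. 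For $M^\e_1$ I would invoke the Burkholder--Davis--Gundy inequality and estimate its optional quadratic variation pathwise by $[M^\e_1]_T\le 4\e^2\sup_s|X^\e(s)|^2\int_0^T\int_{\mathbb{X}}|G|^2\,N^{\e^{-1}\varphi^\e}(dyds)$; Cauchy--Schwarz in $\omega$ then gives
\[
\mathbb{E}\sup_t|M^\e_1(t)|\le C\e\,\big(\mathbb{E}\sup_s|X^\e(s)|^2\big)^{1/2}\Big(\mathbb{E}\int_0^T\int_{\mathbb{X}}|G|^2\e^{-1}\varphi^\e\vartheta(dy)ds\Big)^{1/2}=O(\sqrt{\e})\,\big(1+\mathbb{E}\sup_s|X^\e(s)|^2\big),
\]
the key being that separating the supremum from the intensity integral keeps the right-hand side only quadratic, rather than quartic, in $\sup|X^\e|$, so that no fourth moment is needed. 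After collecting all contributions and absorbing the (small, for $\e\le\e_0$) multiples of $\mathbb{E}\sup_s|X^\e(s)|^2$ and the coercive $\int_0^T\|X^\e\|^2$ piece into the left-hand side, I obtain \eqref{es X e}, and letting $R\to\infty$ finishes the argument. I expect the careful bookkeeping of these martingale estimates under the random intensity $\varphi^\e$ to be the main obstacle.
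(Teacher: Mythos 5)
Your proposal is correct and follows essentially the same route as the paper's proof: It\^o's formula for $|X^\e(t)|^2$ with the cancellation $b(u,u,u)=0$, pathwise application of Lemmas \ref{lem 4.2}--\ref{lem 4.3} to the control-drift and intensity terms, the Burkholder--Davis--Gundy inequality for the martingale part that is linear in $G$ with the quadratic-in-$G$ jump contribution handled by a direct expectation bound, and absorption of all $O(\e)$, $O(a(\e))$ coefficients into the left-hand side for $\e\le\e_0$. The differences are only cosmetic: you localize with stopping times and compensate the $\e^2|G|^2$ term explicitly (the paper keeps it as an uncompensated $N^{\e^{-1}\varphi^\e}$ integral, which is equivalent), you absorb the $O(a(\e))\sup_s|X^\e(s)|^2$ term directly instead of invoking Gronwall's lemma as the paper does, and after BDG you use Cauchy--Schwarz in $\omega$ (yielding an $O(\sqrt{\e})$ coefficient) where the paper uses a pathwise Young inequality (yielding $O(\e)$) --- all of which suffice equally for the conclusion.
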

\begin{proof}
By It\^{o}'s formula,
\begin{eqnarray}\label{es X 00}
&&d|X^\e(t)|^2+2\nu\|X^\e(t)\|^2dt\nonumber\\
&=&
 2 (f(t),X^\e(t))dt
+
 2\< \int_{\mathbb{X}}\e G(X^\e(t-),y)\widetilde{N}^{\e^{-1}\varphi^\e}(dydt),X^\e(t-)\>\\
&&+
 2\<\int_{\mathbb{X}}G(X^\e(t),y)(\varphi^\e(y,t)-1)\vartheta(dy)dt,X^\e(t)\>
+
 \int_{\mathbb{X}}\e^2 |G(X^\e(t-),y)|^2N^{\e^{-1}\varphi^\e}(dydt).\nonumber
\end{eqnarray}

We have
\begin{eqnarray}\label{es X 01}
\int_0^t|2(f(s),X^\e(s))|ds
\leq
\nu\int_0^t\|X^\e(s)\|^2ds+\frac{1}{\nu}\int_0^t\|f(s)\|^2_{V'}ds.
\end{eqnarray}
Set $\psi^\e(y,s)=(\varphi^\e(y,s)-1)/a(\e)\in \mathcal{U}^M_\e$. Then
\begin{eqnarray}\label{es X 02}
&&\Big|\int_0^t2\<\int_{\mathbb{X}}G(X^\e(s),y)(\varphi^\e(y,s)-1)\vartheta(dy)ds,X^\e(s)
\>\Big|\nonumber\\
&\leq&
2a(\e)\int_0^t|X^\e(s)|\int_{\mathbb{X}}|G(X^\e(s),y)||\psi^\e(y,s)|\vartheta(dy)ds\nonumber\\
&\leq&
2a(\e)\int_0^t|X^\e(s)|(1+|X^\e(s)|)\int_{\mathbb{X}}M_G(y)|\psi^\e(y,s)|\vartheta(dy)ds\nonumber\\
&\leq&
4a(\e)\int_0^t(1+|X^\e(s)|^2)\int_{\mathbb{X}}M_G(y)|\psi^\e(y,s)|\vartheta(dy)ds.
\end{eqnarray}
Combining (\ref{es X 00})--(\ref{es X 02}), we have
\begin{eqnarray}\label{es X 04}
&&|X^\e(t)|^2+\nu\int_0^t\|X^\e(s)\|^2ds\nonumber\\
&\leq&
|u_0|^2
+
\frac{1}{\nu}\int_0^T\|f(s)\|^2_{V'}ds
+
 \sup_{l\in[0,T]}\Big|2\int_0^l\< \int_{\mathbb{X}}\e G(X^\e(s-),y)\widetilde{N}^{\e^{-1}\varphi^\e}(dyds),X^\e(s-)\>\Big|\nonumber\\
&&+
 \int_0^T\int_{\mathbb{X}}\e^2 |G(X^\e(s-),y)|^2N^{\e^{-1}\varphi^\e}(dyds)
 +
 4a(\e)\int_0^T\int_{\mathbb{X}}M_G(y)|\psi^\e(y,s)|\vartheta(dy)ds\nonumber\\
&& +
 4a(\e)\int_0^t|X^\e(s)|^2\int_{\mathbb{X}}M_G(y)|\psi^\e(y,s)|\vartheta(dy)ds\nonumber\\
 &=&
 I_1+I_2+I_3+I_4+I_5+I_6(t).
\end{eqnarray}
Applying Gronwall' lemma and using Lemma \ref{lem 4.3}, we get
\begin{eqnarray}\label{es X 05}
&&|X^\e(t)|^2+\int_0^t\|X^\e(s)\|^2ds\nonumber\\
&\leq&
\Big(I_1+I_2+I_3+I_4+I_5\Big)\exp\Big[4a(\e)\Big(\rho_{M_G}(\beta)\sqrt{T}+\Gamma_{M_G}(\beta)a(\e)\Big)\Big].
\end{eqnarray}

By (\ref{eq f}) and Lemma \ref{lem 4.3},
\begin{eqnarray}\label{es X 06}
I_1+I_2+I_5
\leq
C+4a(\e)\Big(\rho_{M_G}(\beta)\sqrt{T}+\Gamma_{M_G}(\beta)a(\e)\Big).
\end{eqnarray}
By Burkholder-Davis-Gundy   inequality and Lemma \ref{lem 4.2},
\begin{eqnarray}\label{es X 07}
\mathbb{E}I_3
&\leq&
\mathbb{E}\Big(\int_0^T\int_{\mathbb{X}}4\e^2|X^\e(s-)|^2|G(X^\e(s-),y)|^2N^{\e^{-1}\varphi^\e}(dy, ds)\Big)^{1/2}\nonumber\\
&\leq&
\mathbb{E}\Big[\sup_{s\in[0,T]}|X^\e(s)|\Big(\int_0^T\int_{\mathbb{X}}4\e^2|G(X^\e(s-),y)|^2N^{\e^{-1}\varphi^\e}(dy, ds)\Big)^{1/2}\Big]\nonumber\\
&\leq&
\frac{1}{4}\mathbb{E}\sup_{s\in[0,T]}|X^\e(s)|^2
+
16\e\mathbb{E}\Big(\int_0^T\int_{\mathbb{X}}|G(X^\e(s),y)|^2\varphi^\e(y,s) \vartheta(dy)ds\Big)\nonumber\\
&\leq&
\frac{1}{4}\mathbb{E}\sup_{s\in[0,T]}|X^\e(s)|^2
+
32\e\mathbb{E}\Big((\sup_{s\in[0,T]}|X^\e(s)|^2+1)\int_0^T\int_{\mathbb{X}}M^2_G(y)\varphi^\e(y,s) \vartheta(dy)ds\Big)\nonumber\\
&\leq&
\frac{1}{4}\mathbb{E}\sup_{s\in[0,T]}|X^\e(s)|^2
+
32\e\varsigma_{M_G}(a^2(\e)+T)\mathbb{E}(\sup_{s\in[0,T]}|X^\e(s)|^2+1).
\end{eqnarray}
Similar to (\ref{es X 07}), we get
\begin{eqnarray}\label{es X 08}
\mathbb{E}I_4
&=&
\e\mathbb{E}\int_0^T\int_{\mathbb{X}} |G(X^\e(s),y)|^2\varphi^\e(y,s)\vartheta(dy)ds\nonumber\\
&\leq&
2\e\varsigma_{M_G}(a^2(\e)+T)\mathbb{E}(\sup_{s\in[0,T]}|X^\e(s)|^2+1).
\end{eqnarray}

Choosing $\e_0>0$ such that $34\e_0\varsigma_{M_G}(a^2(\e_0)+T)\leq 1/8$, and combining (\ref{es X 05})--(\ref{es X 08}),
we obtain (\ref{es X e}). The proof is complete.

\end{proof}

Recall $(\ref{NSE})$. We have
\begin{thm}
\begin{eqnarray}\label{lim Xe U0}
\lim_{\e\to 0}\Big(\mathbb{E}\sup_{t\in[0,T]}|X^\e(t)-u^0(t)|^2+\mathbb{E}\int_0^T\|X^\e(t)-u^0(t)\|^2dt\Big)
=
0.
\end{eqnarray}
\end{thm}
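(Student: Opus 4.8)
The plan is to set $\xi^\e:=X^\e-u^0$, derive an energy identity for $|\xi^\e(t)|^2$ via It\^o's formula, and then run a Gronwall argument in expectation after carefully absorbing the martingale and controlled-drift contributions. Subtracting \eqref{NSE} from the equation for $X^\e$ cancels $f$ and leaves $\xi^\e(0)=0$ with the same jump structure as $X^\e$; applying It\^o's formula to $|\xi^\e|^2$ exactly as in \eqref{es X 00}, and using $(B(u),u)=b(u,u,u)=0$ together with $(A\xi^\e,\xi^\e)=\nu\|\xi^\e\|^2$, I would obtain
\begin{align*}
|\xi^\e(t)|^2+2\nu\int_0^t\|\xi^\e(s)\|^2\,ds
&=-2\int_0^t\big(B(X^\e(s))-B(u^0(s)),\xi^\e(s)\big)\,ds\\
&\quad+2\int_0^t\Big\langle\int_{\mathbb{X}}G(X^\e(s),y)(\varphi^\e(y,s)-1)\vartheta(dy),\xi^\e(s)\Big\rangle\,ds\\
&\quad+M^\e(t)+\int_0^t\int_{\mathbb{X}}\e^2|G(X^\e(s-),y)|^2\,N^{\e^{-1}\varphi^\e}(dyds),
\end{align*}
where $M^\e(t)=2\int_0^t\langle\int_{\mathbb{X}}\e G(X^\e(s-),y)\widetilde N^{\e^{-1}\varphi^\e}(dyds),\xi^\e(s-)\rangle$ is a martingale.

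For the nonlinear term, \eqref{b7} applied with $u=X^\e,v=u^0$ bounds $|(B(X^\e)-B(u^0),\xi^\e)|$ by $\tfrac12\|\xi^\e\|^2+c\,|\xi^\e|^2\|u^0\|_{L^4}^4$, and \eqref{L4} gives $\|u^0\|_{L^4}^4\le\|u^0\|^2|u^0|^2$; choosing the Young constant in \eqref{b7} so that the resulting gradient term is absorbed into $2\nu\int_0^t\|\xi^\e\|^2$ leaves a Gronwall kernel $\int_0^t g(s)|\xi^\e(s)|^2\,ds$ with $g(s)=c\,|u^0(s)|^2\|u^0(s)\|^2$, which lies in $L^1([0,T])$ by \eqref{Es u 0}.

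Next I would show the three remaining terms vanish in expectation. The controlled-drift term carries the factor $a(\e)$: by \eqref{eq G M}, pulling $\sup_{r\le t}|\xi^\e|$ and $(1+\sup_{r\le t}|X^\e|)$ out and bounding $\int_0^T\int_{\mathbb{X}}M_G|\psi^\e|\vartheta\,ds$ through Lemma~\ref{lem 4.3}, Young's inequality yields a bound $\delta\,\mathbb{E}\sup_{r\le t}|\xi^\e|^2+C a^2(\e)\delta^{-1}\mathbb{E}(1+\sup_{r\le t}|X^\e|)^2$. For the jump quadratic-variation term, passing to the compensator converts $\e^2N^{\e^{-1}\varphi^\e}$ into $\e\,\varphi^\e\vartheta$, and \eqref{eq G M} with Lemma~\ref{lem 4.2} bounds its expectation by $C\e\,\varsigma_{M_G}(a^2(\e)+T)\,\mathbb{E}(1+\sup|X^\e|^2)$. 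For $M^\e$, the Burkholder--Davis--Gundy inequality followed by the same compensator/Lemma~\ref{lem 4.2} step, exactly as in \eqref{es X 07}, gives $\tfrac14\mathbb{E}\sup_{r\le t}|\xi^\e|^2+C\e\,\varsigma_{M_G}(a^2(\e)+T)\mathbb{E}(1+\sup|X^\e|^2)$. The uniform bound \eqref{es X e} makes every $\mathbb{E}(1+\sup|X^\e|^2)$ factor finite and uniform in $\e$, so all contributions not proportional to $\mathbb{E}\sup_{r\le t}|\xi^\e|^2$ are $\e$- or $a(\e)$-small; call their sum $\rho^\e\to0$.

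The crucial and most delicate point is the \emph{order} of absorption. If one took suprema over all of $[0,T]$ and only then invoked Gronwall, the coefficients $\tfrac14+\delta$ produced by BDG and Young would be multiplied by the factor $e^{\int_0^T g}$, which is finite but \emph{not} small here (unlike the $a(\e)$-scaled exponent appearing in \eqref{es X e}), and the absorption would fail. Instead I would keep every estimate at the running level $m(t):=\mathbb{E}\sup_{r\le t}|\xi^\e(r)|^2$, so that with $\delta=\tfrac14$ the inequality reads $m(t)\le 2\int_0^t g(s)m(s)\,ds+2\rho^\e$ after absorbing $(\tfrac14+\delta)m(t)$ into the left-hand side --- legitimate since $m(t)<\infty$ by \eqref{es X e} and \eqref{Es u 0}. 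Gronwall's lemma then gives $m(T)\le 2\rho^\e e^{2\int_0^T g}\to0$, and substituting this back into the energy identity and taking expectations controls $\mathbb{E}\int_0^T\|\xi^\e\|^2\,ds$ as well, which yields \eqref{lim Xe U0}. I expect the simultaneous treatment of the Poisson stochastic integral and this pre-Gronwall absorption to be the main obstacle.
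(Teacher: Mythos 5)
Your proof is correct, and structurally it is the paper's proof: the same difference process $Z^\e=X^\e-u^0$, It\^o's formula for $|Z^\e|^2$, the same cancellation of the nonlinearity (the paper splits $B(X^\e)-B(u^0)=B(X^\e,Z^\e)+B(Z^\e,u^0)$ and uses \eqref{b2}, \eqref{b6}, \eqref{L4}, which is precisely the content of \eqref{b7} that you invoke), control of the controlled drift via Lemma \ref{lem 4.3}, of the two jump terms via Burkholder--Davis--Gundy, the compensator, and Lemma \ref{lem 4.2}, and the uniform bounds \eqref{es X e}, \eqref{Es u 0}, followed by Gronwall. Two differences are worth recording. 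First, for the drift term you use only the growth condition \eqref{eq G M}, pulling $\sup_t|\xi^\e|$ and $(1+\sup_t|X^\e|)$ out globally and exploiting the explicit factor $a(\e)$, whereas the paper splits $G(X^\e,y)=[G(X^\e,y)-G(u^0,y)]+G(u^0,y)$ and uses the Lipschitz condition \eqref{eq G L} so as to place part of this term inside the Gronwall kernel $J$; both routes work, and yours needs one hypothesis less at this point. Second---and here your commentary overreaches---the paper does exactly what you assert ``would fail'': it applies Gronwall \emph{pathwise first}, obtaining \eqref{es M} with the factor $\exp\big(\int_0^T J(s)\,ds\big)\le C$, and only afterwards takes expectations and absorbs the BDG contribution \eqref{es M1}. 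This ordering does not fail, because $\int_0^T J(s)\,ds$ is bounded by a \emph{deterministic} constant independent of $\e$ and $\omega$ (by \eqref{Es u 0} and the uniform-over-$\mathcal{S}^M_\e$ bounds of Lemma \ref{lem 4.3}), so the Young constant in the BDG step may be chosen of size $1/(2C)$ rather than $1/2$. You are right that with the constants as literally written in \eqref{es M} and \eqref{es M1} (a prefactor $C$ multiplying $\tfrac12\mathbb{E}\sup_t|Z^\e(t)|^2$) the absorption does not close once $C\ge 2$; but that is a matter of tuning a single constant against a fixed deterministic quantity, not a defect of the ordering. Your expectation-level Gronwall on $m(t)=\mathbb{E}\sup_{r\le t}|\xi^\e(r)|^2$ sidesteps this tuning entirely and is the cleaner execution, but it is a refinement of the same argument, not a different one.
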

\begin{proof}
Set $Z^\e(t)=X^\e(t)-u^0(t)$. Then
\begin{eqnarray}\label{eq Z e}
dZ^\e(t)
&=&
-AZ^\e(t)dt-B(X^\e(t),Z^\e(t))dt-B(Z^\e(t),u^0(t))dt\\
&+&
\e\int_{\mathbb{X}}G(X^\e(t-),y)\tilde{N}^{\e^{-1}\varphi^{\e}}(dydt)
+
\int_{\mathbb{X}}G(X^\e(t),y)(\varphi^\e(y,t)-1)\vartheta(dy)dt\nonumber
\end{eqnarray}
with initial value $Z^\e(0)=0$.

Apply Ito's Formula,
\begin{eqnarray}\label{eq z 01}
& &d|Z^\e(t)|^2+2\nu\|Z^\e(t)\|^2dt\\
&=&
2\< B(Z^\e(t),Z^\e(t)),u^0(t)\> dt
+
2\e\int_{\mathbb{X}}\< G(X^\e(t-),y), Z^\e(t-)\>\tilde{N}^{\e^{-1}\varphi^{\e}}(dydt)\nonumber\\
& &+
2\int_{\mathbb{X}}\< G(X^\e(t),y)(\varphi^\e(y,t)-1), Z^\e(t)\>\vartheta(dy)dt\nonumber
+
\e^2\int_{\mathbb{X}}|G(X^\e(t-),y)|^2N^{\e^{-1}\varphi^{\e}}(dydt).
\end{eqnarray}

By (\ref{b6}) and (\ref{L4}),
\begin{eqnarray}\label{eq z 02}
& &\int_0^t2\Big|\< B(Z^\e(s),Z^\e(s)),u^0(s)\>\Big| ds\nonumber\\
&\leq&
\nu\int_0^t\|Z^\e(s)\|^2ds+\frac{64}{\nu^3}\int_0^t\|u^0(s)\|^4_{L^4}|Z^\e(s)|^2ds\nonumber\\
&\leq&
\nu\int_0^t\|Z^\e(s)\|^2ds+\frac{64}{\nu^3}\sup_{l\in[0,T]}|u^0(l)|^2\int_0^t\|u^0(s)\|^2|Z^\e(s)|^2ds.
\end{eqnarray}
Set $\psi^\e(y,t)=(\varphi^\e(y,t)-1)/a(\e)$. By (\ref{eq G L}) and (\ref{eq G M}),
\begin{eqnarray}\label{eq z 03}
& &2\int_0^t\Big|\int_{\mathbb{X}}\< G(X^\e(s),y)(\varphi^\e(y,s)-1), Z^\e(s)\>\vartheta(dy)\Big|ds\nonumber\\
&\leq&
2\int_0^t|Z^\e(s)|\int_{\mathbb{X}}|G(X^\e(s),y)-G(u^0(s),y)||\varphi^\e(y,s)-1|\vartheta(dy)ds\nonumber\\
&&+
2\int_0^t|Z^\e(s)|\int_{\mathbb{X}}|G(u^0(s),y)||\varphi^\e(y,s)-1|\vartheta(dy)ds\nonumber\\
&\leq&
2a(\e)\int_0^t|Z^\e(s)|^2\int_{\mathbb{X}}L_G(y)|\psi^\e(y,s)|\vartheta(dy)ds\nonumber\\
&&+
a(\e)\int_0^t(1+|Z^\e(s)|^2)(1+|u^0(s)|)\int_{\mathbb{X}}M_G(y)|\psi^\e(y,s)|\vartheta(dy)ds\nonumber\\
&\leq&
a(\e)\int_0^t|Z^\e(s)|^2\int_{\mathbb{X}}\Big(2L_G(y)+(1+\sup_{l\in[0,T]}|u^0(l)|)M_G(y)\Big)|\psi^\e(y,s)|\vartheta(dy)ds\nonumber\\
&&+
a(\e)(1+\sup_{l\in[0,T]}|u^0(l)|)\int_0^t\int_{\mathbb{X}}M_G(y)|\psi^\e(y,s)|\vartheta(dy)ds.
\end{eqnarray}
Combining (\ref{eq z 01})-(\ref{eq z 03}), we get
\begin{eqnarray*}
|Z^\e(t)|^2+\nu\int_0^t\|Z^\e(s)\|^2ds
\leq
M_1(T)+M_2(T)+M_3(T)+\int_0^t J(s)|Z^\e(s)|^2ds,
\end{eqnarray*}
here
\begin{eqnarray*}
M_1(T)
=
2\e\sup_{s\in[0,T]}\Big|\int_0^s\int_{\mathbb{X}}\< G(X^\e(l-),y), Z^\e(l-)\>\tilde{N}^{\e^{-1}\varphi^{\e}}(dydl)\Big|,
\end{eqnarray*}
\begin{eqnarray*}
M_2(T)
=
\e^2\int_0^T\int_{\mathbb{X}}|G(X^\e(t-),y)|^2N^{\e^{-1}\varphi^{\e}}(dydt),
\end{eqnarray*}
\begin{eqnarray*}
M_3(T)
=
a(\e)(1+\sup_{l\in[0,T]}|u^0(l)|)\int_0^T\int_{\mathbb{X}}M_G(y)|\psi^\e(y,s)|\vartheta(dy)ds,
\end{eqnarray*}
and
\begin{eqnarray*}
J(s)&=&
\frac{64}{\nu^3}\sup_{l\in[0,T]}|u^0(l)|^2\|u^0(s)\|^2
+
2a(\e)\int_{\mathbb{X}}L_G(y)|\psi^\e(y,s)|\vartheta(dy)\\
&&+
a(\e)(1+\sup_{l\in[0,T]}|u^0(l)|)\int_{\mathbb{X}}M_G(y)|\psi^\e(y,s)|\vartheta(dy).
\end{eqnarray*}
By Gronwall' lemma, Lemma \ref{lem 4.3} and (\ref{Es u 0}),
\begin{eqnarray}\label{es M}
& &|Z^\e(t)|^2+\nu\int_0^t\|Z^\e(s)\|^2ds\nonumber\\
&\leq&
\Big(M_1(T)+M_2(T)+M_3(T)\Big)\exp\Big(\int_0^T J(s)ds\Big)\nonumber\\
&\leq&
C\Big(M_1(T)+M_2(T)+M_3(T)\Big).
\end{eqnarray}

By Lemma \ref{lem 4.2} and (\ref{es X e})
\begin{eqnarray}\label{es M1}
&&\mathbb{E}M_1(T)\nonumber\\
&\leq&
\mathbb{E}\Big(\int_0^T\int_{\mathbb{X}}4\e^2|G(X^\e(l-),y)|^2|Z^\e(l-)|^2 N^{\e^{-1}\varphi^{\e}}(dydl)\Big)^{1/2}\nonumber\\
&\leq&
1/2\mathbb{E}\Big(\sup_{t\in[0,T]}|Z^\e(t)|^2\Big)+8\e\mathbb{E}\Big(\int_0^T\int_{\mathbb{X}}M^2_G(y)(1+|X^\e(l)|)^2 \varphi^\e(y,l)\vartheta(dy)dl\Big)\nonumber\\
&\leq&
1/2\mathbb{E}\Big(\sup_{t\in[0,T]}|Z^\e(t)|^2\Big)+8\e\mathbb{E}\Big(\sup_{t\in[0,T]}(1+|X^\e(t)|)^2\int_0^T\int_{\mathbb{X}}M^2_G(y) \varphi^\e(y,l)\vartheta(dy)dl\Big)\nonumber\\
&\leq&
1/2\mathbb{E}\Big(\sup_{t\in[0,T]}|Z^\e(t)|^2\Big)+16\e\varsigma_{M_G}(a^2(\e)+T)C.
\end{eqnarray}

Similarly, we have
\begin{eqnarray}\label{es M2}
\mathbb{E}M_2(T)
&=&
\e\mathbb{E}\int_0^T\int_{\mathbb{X}}|G(X^\e(t),y)|^2\varphi^{\e}(y,t)\vartheta(dy)dt\nonumber\\
&\leq&
2\e\mathbb{E}\Big(\sup_{t\in[0,T]}(1+|X^\e(t)|^2)\int_0^T\int_{\mathbb{X}}M^2_G(y) \varphi^\e(y,t)\vartheta(dy)dt\Big)\nonumber\\
&\leq&
\e\varsigma_{M_G}(a^2(\e)+T)C.
\end{eqnarray}

By (\ref{Es u 0}) and Lemma \ref{lem 4.3},
\begin{eqnarray}\label{es M3}
M_3(T)
\leq
Ca(\e)\Big(\rho_{M_G}(\beta)\sqrt{T}+\Gamma_{M_G}(\beta)a(\e)\Big).
\end{eqnarray}
Combining (\ref{es M})--(\ref{es M3}), we have
\begin{eqnarray}
\lim_{\e\to 0}\Big(\mathbb{E}\sup_{t\in[0,T]}|Z^\e(t)|^2+\mathbb{E}\int_0^T\|Z^\e(t)\|^2dt\Big)
=
0.
\end{eqnarray}
The proof is complete.
\end{proof}

\vskip 5mm
Define
\begin{eqnarray}\label{eq G e}
\mathcal{G}^\e(\e N^{\e^{-1}\varphi^\e}):=Y^\e=\frac{1}{a(\e)}(X^\e-u^0).
\end{eqnarray}
Then $Y^\e$ satisfies
\begin{eqnarray}\label{eq Ye0}
\left\{
 \begin{array}{llll}
 & \hbox{$dY^\e(t)=-AY^\e(t)dt-B(Y^\e(t),u^0(t))dt-B(X^\e(t),Y^\e(t))dt$}\\
 & \hbox{$\qquad\qquad +\frac{\e}{a(\e)}\int_{\mathbb{X}} G(X^\e(t-),y)\widetilde{N}^{\e^{-1}\varphi^\e}(dydt)$}\\
 & \hbox{$\qquad\qquad+\frac{1}{a(\e)}\int_{\mathbb{X}}G(X^\e(t),y)(\varphi^\e(y,t)-1)\vartheta(dy)dt$}, \\
 & \hbox{$Y^\e(0)=0$.}
 \end{array}
\right.
\end{eqnarray}

\begin{prop}\label{prop 02}
Given $M<\infty$. Let $\{\varphi^\e\}_{\e>0}$ be such that $\varphi^\e\in\mathcal{U}^M_{+,\e}$ for every $\e>0$. Let
$\psi^\e=(\varphi^\e-1)/a(\e)$ and $\beta\in(0,1]$. Then the family $\{Y^\e, \psi^\e1_{\{|\psi^\e|\leq \beta/a(\e)\}}\}_{\e>0}$ is tight in $D([0,T],H)\times B_2\Big(\sqrt{M\kappa_2(1)}\Big)$, and any limit point $(Y,\psi)$ solves the equation (\ref{Eq S I}).
\end{prop}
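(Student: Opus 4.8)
The plan is to verify the two requirements of the statement separately: first the joint tightness, then the identification of any weak limit with a solution of (\ref{Eq S I}). For the second coordinate there is nothing to prove about tightness: by the remark preceding Condition MDP each $\psi^\e 1_{\{|\psi^\e|\leq\beta/a(\e)\}}$ lies in the fixed ball $B_2(\sqrt{M\kappa_2(1)})$ which, equipped with the weak $L^2(\vartheta_T)$-topology, is compact and metrizable; hence that family is automatically tight and every subsequence has a further subsequence converging weakly to some $\psi\in B_2(\sqrt{M\kappa_2(1)})$. Thus the real work is (i) tightness of $\{Y^\e\}$ in $D([0,T],H)$ and (ii) showing that along a convergent subsequence the pair $(Y^\e,\psi^\e 1_{\{\cdots\}})$ has a limit $(Y,\psi)$ satisfying (\ref{Eq S I}). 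As a preliminary I would first establish a uniform energy bound $\sup_\e[\mathbb{E}\sup_{t}|Y^\e(t)|^2+\mathbb{E}\int_0^T\|Y^\e(s)\|^2ds]<\infty$ by applying It\^o's formula to $|Y^\e|^2$ in (\ref{eq Ye0}): the term $(B(X^\e,Y^\e),Y^\e)=b(X^\e,Y^\e,Y^\e)$ vanishes by (\ref{b2}), the term $(B(Y^\e,u^0),Y^\e)=-b(Y^\e,Y^\e,u^0)$ is absorbed via (\ref{b6})--(\ref{L4}), the $\psi^\e$-driven drift is controlled by Lemmas \ref{lem 4.2}--\ref{lem 4.3}, the compensated integral is handled using $\e/a(\e)\to0$, and Gronwall's lemma closes the estimate, with (\ref{es X e}) and (\ref{Es u 0}) feeding in.

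For the tightness of $\{Y^\e\}$ I would follow the decomposition strategy announced in the Introduction and split $Y^\e=\Psi^\e+\Phi^\e$, where $\Psi^\e$ solves the linear equation driven only by the compensated Poisson integral, $d\Psi^\e=-A\Psi^\e\,dt+\tfrac{\e}{a(\e)}\int_{\mathbb{X}}G(X^\e(t-),y)\widetilde N^{\e^{-1}\varphi^\e}(dy\,dt)$ with $\Psi^\e(0)=0$, and $\Phi^\e:=Y^\e-\Psi^\e$ collects the remaining drift and bilinear terms. Since $\e/a(\e)=a(\e)\cdot\e/a^2(\e)\to0$, a Burkholder--Davis--Gundy estimate of $\Psi^\e$ exactly as in (\ref{es X 07})/(\ref{es M1}), combined with (\ref{eq G M}) and Lemma \ref{lem 4.2}, gives $\mathbb{E}\sup_t|\Psi^\e(t)|^2\to0$, so this part is negligible. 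For $\Phi^\e$, whose forcing is $F^\e(s)=-B(Y^\e(s),u^0(s))-B(X^\e(s),Y^\e(s))+\int_{\mathbb{X}}G(X^\e(s),y)\psi^\e(y,s)\vartheta(dy)$, the purely drift ($H$-valued) component $\int_{\mathbb{X}}G(X^\e(\cdot),y)\psi^\e(y,\cdot)\vartheta(dy)$ is uniformly integrable in $L^1([0,T],H)$ by (\ref{eq G M}) and the second bound of Lemma \ref{lem 4.3}, so the part of $\Phi^\e$ it drives is relatively compact in $C([0,T],H)$ by Lemma \ref{lem-thm2-01}; the bilinear ($V'$-valued) terms are controlled by the uniform $L^2([0,T],V)$ bound of the previous paragraph and handled through the compact embedding $V\hookrightarrow H$ together with a uniform time-regularity (Aldous) estimate. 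Combining with $\mathbb{E}\sup_t|\Psi^\e|^2\to0$ yields tightness of $\{Y^\e\}$ in $D([0,T],H)$, with every limit point concentrated on $C([0,T],H)$.

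To identify the limit I would invoke the Skorokhod representation theorem on the tight family $(Y^\e,\psi^\e 1_{\{|\psi^\e|\leq\beta/a(\e)\}})$ to obtain, on a common probability space, versions converging almost surely to $(Y,\psi)$, with $Y^\e\to Y$ in $D([0,T],H)$ and $\psi^\e 1_{\{\cdots\}}\to\psi$ weakly in $L^2(\vartheta_T)$; the uniform $V$-bound upgrades this to $Y^\e\to Y$ weakly in $L^2([0,T],V)$ and, via $V\hookrightarrow H$ compactly, strongly in $L^2([0,T],H)$. Testing (\ref{eq Ye0}) against a smooth $v\in D(A)$ and letting $\e\to0$, the compensated integral vanishes (since $\e/a(\e)\to0$ and $\mathbb{E}\sup_t|\Psi^\e|^2\to0$), while (\ref{lim Xe U0}) gives $X^\e\to u^0$, so $-b(X^\e,v,Y^\e)\to-b(u^0,v,Y)$ and $b(Y^\e,v,u^0)\to b(Y,v,u^0)$, producing $-B(Y,u^0)-B(u^0,Y)$ in the limit and matching (\ref{Eq S I}).

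The hard part is the passage to the limit in the drift $\int_0^t\int_{\mathbb{X}}G(X^\e(s),y)\psi^\e(y,s)\vartheta(dy)ds$, since $\psi^\e$ converges only weakly while the truncation defining the tight second coordinate must be reinstated. I would split the integrand at $\{|\psi^\e|>\beta/a(\e)\}$: the excluded part is dominated by $(1+\sup_s|X^\e(s)|)\int_{\mathbb{X}\times[0,T]}M_G(y)|\psi^\e(y,s)|1_{\{|\psi^\e|>\beta/a(\e)\}}\vartheta(dy)ds$, which tends to $0$ by (\ref{eq G M}) and Lemma \ref{lem 4.7}; on the complementary set one has the product of the weakly convergent factor $\psi^\e 1_{\{|\psi^\e|\leq\beta/a(\e)\}}\rightharpoonup\psi$ against $\langle G(X^\e(s),y),v\rangle$, which converges strongly in $L^2(\vartheta_T)$ because $X^\e\to u^0$ in $L^2([0,T],H)$ and $G$ is Lipschitz by (\ref{eq G L}) with $L_G\in L^2(\vartheta)$. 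A weak--strong convergence argument then yields $\int_{\mathbb{X}}\langle G(X^\e(s),y),v\rangle\psi^\e(y,s)1_{\{\cdots\}}\vartheta(dy)\to\int_{\mathbb{X}}\langle G(u^0(s),y),v\rangle\psi(y,s)\vartheta(dy)$, i.e. exactly the noise term of (\ref{Eq S I}). Since (\ref{Eq S I}) is uniquely solvable by the energy argument underlying Proposition \ref{Prop 01}, the limit point is unique, so the whole family converges, completing the verification of Condition MDP-2.
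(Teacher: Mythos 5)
Your treatment of the second coordinate, of the stochastic convolution $\Psi^\e$ (the paper's $Z^\e$ in Step 1), and your use of Lemma \ref{lem 4.7} to kill the large-jump part of the drift all track the paper. The genuine gap is your tightness argument for $\{Y^\e\}$ itself, specifically the bilinear part of $\Phi^\e$. You propose to conclude tightness in $D([0,T],H)$ from the uniform $L^2([0,T],V)$ bound, the compact embedding $V\hookrightarrow H$, and an Aldous estimate. But Aldous' criterion in an infinite-dimensional state space requires, besides time regularity, compact containment: for each fixed $t$ the laws of $Y^\e(t)$ must be tight in the \emph{strong} topology of $H$, which needs a uniform bound at fixed times in a space compactly embedded in $H$, e.g. $\sup_\e\mathbb{E}\|Y^\e(t)\|^2<\infty$. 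Your energy estimate gives only $\mathbb{E}\int_0^T\|Y^\e(s)\|^2ds\le C$, i.e. $V$-control for a.e. $t$, and no smoothing is available for the bilinear forcing, which lies merely in $L^1/L^2([0,T],V')$; Aubin--Lions/Simon-type compactness from ($L^2_tV$ plus time regularity in $V'$) yields tightness in $L^2([0,T],H)$ or in $C([0,T],V')$, but not in $D([0,T],H)$. So this step cannot be completed as stated. Two secondary problems: Lemma \ref{lem-thm2-01} concerns \emph{deterministic} uniformly integrable families, while your forcing $\int_{\mathbb{X}}G(X^\e(s),y)\psi^\e(y,s)\vartheta(dy)$ carries the random unbounded factor $1+|X^\e(s)|$, so at best you get compactness after localizing on $\{\sup_t|X^\e(t)|\le R\}$; and invoking the Skorokhod representation for $(Y^\e,\psi^\e 1_{\{|\psi^\e|\le\beta/a(\e)\}})$ presupposes exactly the tightness in question, while transferring equation (\ref{eq Ye0}) to the new probability space is delicate because it contains Poisson stochastic integrals.

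The paper's route avoids all of this, and the difference is instructive: it never verifies a compactness criterion for $Y^\e$ at all. Besides your $Z^\e$ and the large-jump corrector $L^\e$, it introduces a third corrector $U^\e$ driven by $\int_{\mathbb{X}}(G(X^\e(t),y)-G(u^0(t),y))\psi^\e(y,t)1_{\{|\psi^\e|\le\beta/a(\e)\}}\vartheta(dy)$, which vanishes by (\ref{lim Xe U0}) and Lemma \ref{lem 4.3}; this swap replaces $G(X^\e,\cdot)$ by the \emph{deterministic} coefficient $G(u^0,\cdot)$ in the residual drift. The residual $\Upsilon^\e=Y^\e-K^\e$, $K^\e=Z^\e+L^\e+U^\e$, then solves the noise-free pathwise PDE (\ref{eq Up}) whose only inputs are $K^\e$ (converging to $0$ in probability) and $\psi^\e 1_{\{|\psi^\e|\le\beta/a(\e)\}}$ (living in the compact ball $B_2(\sqrt{M\kappa_2(1)})$). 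These inputs are trivially tight; the Skorokhod representation is applied to \emph{them}, not to $Y^\e$; and a deterministic Gronwall stability estimate --- essentially your Proposition \ref{Prop 01} argument with perturbations --- shows that the represented $\widetilde{\Upsilon}^\e$ converges $\mathbb{P}^1$-a.s. in $C([0,T],H)\cap L^2([0,T],V)$ to $\mathcal{G}_0(\widetilde\psi)$. Tightness of $Y^\e$ in $D([0,T],H)$ and identification of the limit as a solution of (\ref{Eq S I}) then follow \emph{simultaneously} from this almost sure convergence. To repair your proof, replace the Aldous step by this continuity-of-the-solution-map argument: add the $U^\e$-type corrector (or $X^\e-u^0$) to your list of tight inputs and deduce convergence of $\Phi^\e$ from your own Gronwall estimate, rather than from a compactness criterion.
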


\begin{proof} The proof is divided into four steps.

{\bf Step 1.} Let $Z^\e$ be the solution of the following equation
\begin{eqnarray*}
dZ^\e(t)=-AZ^\e(t)dt+\frac{\e}{a(\e)}\int_{\mathbb{X}} G(X^\e(t-),y)\widetilde{N}^{\e^{-1}\varphi^\e}(dydt)
\end{eqnarray*}
with initial value $Z^\e(0)=0$.

Applying Ito's formula to $|Z^\e(t)|^2$,
\begin{eqnarray}\label{eq Z1}
& &d|Z^\e(t)|^2+2\nu\|Z^\e(t)\|^2dt\\
&=&
\frac{2\e}{a(\e)}\int_{\mathbb{X}}\< G(X^\e(t-),y), Z^\e(t-)\>\widetilde{N}^{\e^{-1}\varphi^\e}(dydt)
+
\frac{\e^2}{a^2(\e)}\int_{\mathbb{X}}|G(X^\e(t-),y)|^2N^{\e^{-1}\varphi^\e}(dydt).\nonumber
\end{eqnarray}
By Burkholder-Davis-Gundy inequality, Lemma \ref{lem 4.2} and (\ref{es X e}), we have
\begin{eqnarray}\label{eq Z2}
& &\mathbb{E}\Big(\sup_{t\in[0,T]}
\Big|
\int_0^t\int_{\mathbb{X}}\frac{2\e}{a(\e)}\< G(X^\e(s-),y), Z^\e(s-)\>\widetilde{N}^{\e^{-1}\varphi^\e}(dyds)
\Big|\Big)\nonumber\\
&\leq&
C\mathbb{E}\Big(\int_0^T\int_{\mathbb{X}}\frac{\e^2}{a^2(\e)}|G(X^\e(s-),y)|^2|Z^\e(s-)|^2N^{\e^{-1}\varphi^\e}(dyds)\Big)^{1/2}\nonumber\\
&\leq&
1/2\mathbb{E}\Big(\sup_{t\in[0,T]}|Z^\e(t)|^2\Big)
+
C
\mathbb{E}\Big(\int_0^T\int_{\mathbb{X}}\frac{\e^2}{a^2(\e)}|G(X^\e(s-),y)|^2N^{\e^{-1}\varphi^\e}(dyds)\Big)\nonumber\\
&\leq&
1/2\mathbb{E}\Big(\sup_{t\in[0,T]}|Z^\e(t)|^2\Big)
+
\frac{C\e}{a^2(\e)}\mathbb{E}\Big(\int_0^T\int_{\mathbb{X}}M^2_G(y)(1+|X^\e(s)|^2)\varphi^\e(y,s)\vartheta(dy)ds\Big)\nonumber\\
&\leq&
1/2\mathbb{E}\Big(\sup_{t\in[0,T]}|Z^\e(t)|^2\Big)
+
\frac{C\e}{a^2(\e)}\varsigma_{M_G}(a^2(\e)+T),
\end{eqnarray}
and similarly
\begin{eqnarray}\label{eq Z3}
& &\mathbb{E}\Big(\int_0^T\int_{\mathbb{X}}\frac{\e^2}{a^2(\e)}|G(X^\e(t-),y)|^2N^{\e^{-1}\varphi^\e}(dydt)\Big)\nonumber\\
&=&
\frac{\e}{a^2(\e)}\mathbb{E}\Big(\int_0^T\int_{\mathbb{X}}|G(X^\e(t),y)|^2\varphi^\e(y,t)\vartheta(dy)dt\Big)\nonumber\\
&\leq&
\frac{\e}{a^2(\e)}\varsigma_{M_G}(a^2(\e)+T).
\end{eqnarray}
Combining (\ref{eq Z1}) (\ref{eq Z2}) and (\ref{eq Z3}), we obtain
\begin{eqnarray}\label{Z}
\lim_{\e\to0}\mathbb{E}\Big(\sup_{t\in[0,T]}|Z^\e(t)|^2+\int_0^T\|Z^\e(t)\|^2dt\Big)=0.
\end{eqnarray}

{\bf Step 2.} Recall $\psi^\e=(\varphi^\e-1)/a(\e)$. Let $L^\e(t)$ be the unique solution of
\begin{eqnarray*}
\left\{
 \begin{array}{llll}
 & \hbox{ $dL^\e(t)=-AL^\e(t)dt+\int_{\mathbb{X}}G(X^\e(t),y)\psi^\e(y,t)1_{\{|\psi^\e|>\beta/a(\e)\}}\vartheta(dy)dt$},\\
 & \hbox{$L^\e(0)=0$.}
 \end{array}
\right.
\end{eqnarray*}

We have
\begin{eqnarray}\label{eq L1}
& &|L^\e(t)|^2+2\nu\int_0^t\|L^\e(s)\|^2ds\nonumber\\
&=&
2\int_0^t\int_{\mathbb{X}}\< G(X^\e(s),y)\psi^\e(y,s)1_{\{|\psi^\e|>\beta/a(\e)\}},L^\e(s)\>\vartheta(dy)ds\nonumber\\
&\leq&
2\int_0^T\int_{\mathbb{X}}|G(X^\e(s),y)||\psi^\e(y,s)|1_{\{|\psi^\e|>\beta/a(\e)\}}|L^\e(s)|\vartheta(dy)ds\nonumber\\
&\leq&
2\sup_{t\in[0,T]}|L^\e(t)|\sup_{t\in[0,T]}(1+|X^\e(t)|)\int_0^T\int_{\mathbb{X}}M_G(y)|\psi^\e(y,s)|1_{\{|\psi^\e|>\beta/a(\e)\}}\vartheta(dy)ds\nonumber\\
&\leq&
1/2\sup_{t\in[0,T]}|L^\e(t)|^2\nonumber\\
&&+C\sup_{t\in[0,T]}(1+|X^\e(t)|^2)\Big[\int_0^T\int_{\mathbb{X}}M_G(y)|\psi^\e(y,s)|1_{\{|\psi^\e|>\beta/a(\e)\}}\vartheta(dy)ds\Big]^2.\nonumber
\end{eqnarray}
By (\ref{es X e}) and Lemma \ref{lem 4.7},
\begin{eqnarray}\label{eq L1-a}
& &\mathbb{E}\Big(\sup_{t\in[0,T]}|L^\e(t)|^2+\nu\int_0^T\|L^\e(t)\|^2dt\Big)\nonumber\\
&\leq&
C\mathbb{E}\Big(\sup_{t\in[0,T]}(1+|X^\e(t)|^2)\Big)
\Big[\sup_{\psi\in\mathcal{S}^M_\e}\int_0^T\int_{\mathbb{X}}M_G(y)|\psi(y,s)|1_{\{|\psi|>\beta/a(\e)\}}\vartheta(dy)ds\Big]^2\nonumber\\
&\to & 0,\ \ as\ \e \to 0.
\end{eqnarray}

{\bf Step 3.} Denote by $U^\e$ the unique solution of the following equation
\begin{eqnarray*}\label{eq U0}
dU^\e(t)=-AU^\e(t)dt+\int_{\mathbb{X}}\Big(G(X^\e(t),y)-G(u^0(t),y)\Big)\psi^\e(y,t)1_{\{|\psi^\e|\leq\beta/a(\e)\}}\vartheta(dy)dt,
\end{eqnarray*}
with initial value $U^\e(0)=0$. Then
\begin{eqnarray*}\label{eq U1}
&  &|U^\e(t)|^2+2\nu\int_0^t\|U^\e(s)\|^2ds\\
&=&
2\int_0^t\int_{\mathbb{X}}
    \< \Big(G(X^\e(s),y)-G(u^0(s),y)\Big), U^\e(s)\>\psi^\e(y,s)1_{\{|\psi^\e|\leq\beta/a(\e)\}}
\vartheta(dy)ds\nonumber\\
&\leq&
2\int_0^T\int_{\mathbb{X}}
    \Big|G(X^\e(s),y)-G(u^0(s),y)\Big||U^\e(s)||\psi^\e(y,s)|
\vartheta(dy)ds\nonumber\\
&\leq&
2\sup_{s\in[0,T]}|U^\e(s)|\sup_{s\in[0,T]}|X^\e(s)-u^0(s)|\int_0^T\int_{\mathbb{X}}
    L_G(y)|\psi^\e(y,s)|
\vartheta(dy)ds\nonumber\\
&\leq&
1/2\sup_{s\in[0,T]}|U^\e(s)|^2+C\sup_{s\in[0,T]}\Big |X^\e(s)-u^0(s)\Big |^2\Big(\sup_{\psi\in\mathcal{S}^M_\e}\int_0^T\int_{\mathbb{X}}
    L_G(y)|\psi(y,s)|
\vartheta(dy)ds\Big)^2.\nonumber\\
\end{eqnarray*}
By Lemma \ref{lem 4.3} and (\ref{lim Xe U0}), we have
\begin{eqnarray}\label{eq U2}
\lim_{\e\to 0}\Big[\mathbb{E}\Big(\sup_{s\in[0,T]}|U^\e(s)|^2\Big)+\mathbb{E}\Big(\int_0^T\|U^\e(s)\|^2ds\Big)\Big]=0.
\end{eqnarray}

{\bf Step 4.}  Set $K^\e=Z^\e+L^\e+U^\e$ and denote $\Upsilon^\e=Y^\e-K^\e$. By (\ref{eq Ye0}), we have
\begin{eqnarray}\label{eq Up}
\left\{
 \begin{array}{llll}
 & \hbox{ $d\Upsilon^\e(t)=-A\Upsilon^\e(t)dt-a(\e)B\Big(\Upsilon^\e(t)+K^\e(t),\Upsilon^\e(t)+K^\e(t)\Big)dt$},\\
 & \hbox{ $\ \ \ \ \ \ \ \ \ \ \ \ \ -B\Big(u^0(t),\Upsilon^\e(t)+K^\e(t)\Big)dt-B\Big(\Upsilon^\e(t)+K^\e(t),u^0(t)\Big)dt$},\\
 & \hbox{ $\ \ \ \ \ \ \ \ \ \ \ \ \ +\int_{\mathbb{X}}G(u^0(t),y)\psi^\e(y,t)1_{\{|\psi^\e|\leq\beta/a(\e)\}}\vartheta(dy)dt$},\\

 & \hbox{$\Upsilon^\e(0)=0$.}
 \end{array}
\right.
\end{eqnarray}

Set
$$
\Pi=\Big(D([0,T],H)\cap L^2([0,T],V);\ C([0,T],H)\cap L^2([0,T],V);\ B_2\Big(\sqrt{M\kappa_2(1)}\Big)\Big)
.$$
By (\ref{Z}), (\ref{eq L1-a}) and (\ref{eq U2}), and notice that $(\psi^\e1_{\{|\psi^\e|\leq \beta/a(\e)\}})_{\e>0}$ is tight in $B_2\Big(\sqrt{M\kappa_2(1)}\Big)$ (see Lemma 3.2 in \cite{Budhiraja-Dupuis-Ganguly}),
$(Z^\e,\ L^\e+U^\e,\ \psi^\e1_{\{|\psi^\e|\leq \beta/a(\e)\}})_{\e>0}$ is tight in $\Pi$, and let $(0,0,\psi)$ be any limit point of the tight family,
and denote by  $Y=\mathcal{G}_0(\psi)$ the solution of equation (\ref{Eq S I}).

\vskip 0.3cm
It follows from the Skorokhod representation theorem that there exist a stochastic basis
$(\Omega^1,\mathcal{F}^1,\{\mathcal{F}_t^1\}_{t\in[0,T]},\mathbb{P}^1)$ and, on this basis,
$\Pi$-valued random variables $(\widetilde{Z}^\e,\ \widetilde{LU}^\e,\ \widetilde{\psi}^\e),$
$(0,0,\widetilde{\psi}),\ \epsilon\in(0,\epsilon_0)$,
such that $(\widetilde{Z}^\e,\ \widetilde{LU}^\e,\ \widetilde{\psi}^\e)$ (respectively $(0,0,\widetilde{\psi})$) has the same law as $(Z^\e,\ L^\e+U^\e,\ \psi^\e1_{\{|\psi^\e|\leq \beta/a(\e)\}})$ (respectively $(0,0,\psi)$), and
$(\widetilde{Z}^\e,\ \widetilde{LU}^\e,\ \widetilde{\psi}^\e)\rightarrow (0,0,\widetilde{\psi})$ in $\Pi,\ \mathbb{P}^1$-a.s..

Set $\widetilde{K}^\e=\widetilde{Z}^\e+\widetilde{LU}^\e$. Denote by $\widetilde{\Upsilon}^\e$ the unique solution of (\ref{eq Up}) with $(K^\e, \psi^\e)$
replaced by $(\widetilde{K}^\e, \widetilde{\psi}^\e)$. Then $(\widetilde{K}^\e, \widetilde{\Upsilon}^\e)$ has the law as $(K^\e, \Upsilon^\e)$. Hence, $\widetilde{Y}^\e=\widetilde{K}^\e+\widetilde{\Upsilon}^\e$ has the same law
 as $Y^\e=K^\e+\Upsilon^\e$ in $D([0,T],H)\cap L^2([0,T],V)$. Denote by $\widetilde{Y}$ the solution of equation (\ref{Eq S I}) with $\psi(y,t)$ replaced by $\widetilde{\psi}(y,t)$.
$\widetilde{Y}$ must have the same law as $Y$.
\vskip 0.4cm
Thus, the proof of the Proposition will be complete if we can show  that
\begin{eqnarray}\label{lim 01}
\sup_{t\in[0,T]}|\widetilde{Y}^\e(t)-\widetilde{Y}(t)|^2+\int_0^T\|\widetilde{Y}^\e(t)-\widetilde{Y}(t)\|^2dt\to 0,\ \mathbb{P}^1-a.s.,\ as\ \e\to 0.
\end{eqnarray}
This is the task of the remaining proof.
\vskip 0.4cm
Consider the following equation
\begin{eqnarray}\label{eq Ga}
\left\{
 \begin{array}{llll}
 & \hbox{ $d\widetilde{\Gamma}^\e(t)=-A\widetilde{\Gamma}^\e(t)dt+\int_{\mathbb{X}}G(u^0(t),y)\widetilde{\psi}^\e(y,t)\vartheta(dy)dt$},\\
 & \hbox{$\ \ \widetilde{\Gamma}^\e(0)=0$.}
 \end{array}
\right.
\end{eqnarray}
Using similar arguments as in the proof of (\ref{eq Z}), we have
\begin{eqnarray}\label{eq 00}
\lim_{\e\to 0}\Big(\sup_{t\in[0,T]}|\widetilde{\Gamma}^\e(t)-\widetilde{\Gamma}(t)|^2+\int_0^T\|\widetilde{\Gamma}^\e(t)-\widetilde{\Gamma}(t)\|^2dt\Big)
=
0,
\end{eqnarray}
here $\widetilde{\Gamma}$ satisfies (\ref{eq Ga}) with $\widetilde{\psi}^\e(y,t)$ replaced by $\widetilde{\psi}(y,t)$.

Set $\widetilde{M}=\widetilde{Y}-\widetilde{\Gamma}$ and $\widetilde{M}^\e=\widetilde{Y}^\e-\widetilde{K}^\e-\widetilde{\Gamma}^\e$. Then
\begin{eqnarray}\label{eq M0}
\left\{
 \begin{array}{llll}
 & \hbox{ $d\widetilde{M}(t)=-A\widetilde{M}(t)dt-B\Big(u^0(t),\widetilde{M}(t)+\widetilde{\Gamma}(t)\Big)dt-B\Big(\widetilde{M}(t)+\widetilde{\Gamma}(t),u^0(t)\Big)dt$},\\
 & \hbox{$\widetilde{M}(0)=0$.}
 \end{array}
\right.
\end{eqnarray}
and
\begin{equation}\label{eq M1}
\left\{
 \begin{array}{ccl}
 d\widetilde{M}^\e(t)&=&-A\widetilde{M}^\e(t)dt-a(\e)B\Big(\widetilde{M}^\e(t)
 +\widetilde{\Gamma}^\e(t)+\widetilde{K}^\e(t),\widetilde{M}^\e(t)+\widetilde{\Gamma}^\e(t)+\widetilde{K}^\e(t)\Big)dt,\\
  & & -B\Big(u^0(t),\widetilde{M}^\e(t)+\widetilde{\Gamma}^\e(t)+\widetilde{K}^\e(t)\Big)dt\\
  && -B\Big(\widetilde{M}^\e(t)
  +\widetilde{\Gamma}^\e(t)+\widetilde{K}^\e(t),u^0(t)\Big)dt,\\
 \widetilde{M}^\e(0)&=&0.
 \end{array}
\right.
\end{equation}

Since
\begin{eqnarray}\label{eq 02}
\lim_{\e\to0} \Big[\sup_{t\in[0,T]}|\widetilde{K}^\e(t)|^2+\int_0^T\|\widetilde{K}^\e(t)\|^2dt\Big]\rightarrow 0,\ \mathbb{P}^1-a.s.,
\end{eqnarray}
taking into account  (\ref{eq 00}), by standard arguments (see \cite{Temam}), we have
\begin{eqnarray}\label{eq 03}
& &\sup_{\e\in(0,\e_0]} \Big[\sup_{t\in[0,T]}|\widetilde{M}^\e(t)|^2+\int_0^T\|\widetilde{M}^\e(t)\|^2dt\Big]
+
\Big[\sup_{t\in[0,T]}|\widetilde{M}(t)|^2+\int_0^T\|\widetilde{M}(t)\|^2dt\Big]\nonumber\\
&\leq& C(\omega^1)<\infty,\ \mathbb{P}^1-a.s..
\end{eqnarray}
Set $\overline{\widetilde{M}^\e}=\widetilde{M}^\e-\widetilde{M}$ and $\overline{\widetilde{\Gamma}^\e}=\widetilde{\Gamma}^\e-\widetilde{\Gamma}$. Now the proof of (\ref{lim 01}) reduces to the proof of
\begin{equation}\label{eq 03-1}
\lim_{\e\to 0}\Big[\sup_{t\in[0,T]}|\overline{\widetilde{M}^\e}(t)|^2+\int_0^T\|\overline{\widetilde{M}^\e}(s)\|^2ds\Big]=0,\ \ \mathbb{P}^1-a.s.,
\end{equation}
We have
\begin{eqnarray}\label{eq M}
& &|\overline{\widetilde{M}^\e}(t)|^2+2\nu\int_0^t\|\overline{\widetilde{M}^\e}(s)\|^2ds\nonumber\\
&=&
-2a(\e)\int_0^t
\left (
B\Big(\widetilde{M}^\e(s)+\widetilde{\Gamma}^\e(s)+\widetilde{K}^\e(s),\widetilde{M}^\e(s)+\widetilde{\Gamma}^\e(s)+\widetilde{K}^\e(s)\Big), \overline{\widetilde{M}^\e}(s)
\right )
ds\nonumber\\
& &-2\int_0^t
\left (
B\Big(u^0(s),\overline{\widetilde{M}^\e}(s)+\overline{\widetilde{\Gamma}^\e}(s)+\widetilde{K}^\e(s)\Big), \overline{\widetilde{M}^\e}(s)
\right )
ds\nonumber\\
& &-2\int_0^t
\left (
B\Big(\overline{\widetilde{M}^\e}(s),u^0(s)\Big), \overline{\widetilde{M}^\e}(s)
\right )
ds\nonumber\\
& &-2\int_0^t
\left (
B\Big(\overline{\widetilde{\Gamma}^\e}(s)+\widetilde{K}^\e(s),u^0(s)\Big), \overline{\widetilde{M}^\e}(s)
\right )
ds\nonumber\\
&=&
I_1(t)+I_2(t)+I_3(t)+I_4(t).
\end{eqnarray}

Fix $\omega^1\in\Omega^1$. By (\ref{b4}) and (\ref{eq 03}), we have
\begin{eqnarray}\label{eq I1}
& &|I_1(t)|\nonumber\\
&\leq&
4a(\e)\int_0^t
|\widetilde{M}^\e(s)+\widetilde{\Gamma}^\e(s)+\widetilde{K}^\e(s)|
\|\widetilde{M}^\e(s)+\widetilde{\Gamma}^\e(s)+\widetilde{K}^\e(s)\|
\|\overline{\widetilde{M}^\e}(s)\|
ds\nonumber\\
&\leq&
a(\e)\int_0^t\|\overline{\widetilde{M}^\e}(s)\|^2ds\nonumber\\
&&+
2a(\e)\int_0^t
|\widetilde{M}^\e(s)+\widetilde{\Gamma}^\e(s)+\widetilde{K}^\e(s)|^2
\|\widetilde{M}^\e(s)+\widetilde{\Gamma}^\e(s)+\widetilde{K}^\e(s)\|^2
ds\nonumber\\
&\leq&
a(\e)\int_0^t\|\overline{\widetilde{M}^\e}(s)\|^2ds
+
a(\e)C(\omega^1),
\end{eqnarray}
and
\begin{eqnarray}\label{eq I2}
& &|I_2(t)|\nonumber\\
&=&2\Big|\int_0^t
\left (
B\Big(u^0(s),\overline{\widetilde{\Gamma}^\e}(s)+\widetilde{K}^\e(s)\Big), \overline{\widetilde{M}^\e}(s)
\right )
ds\Big|\nonumber\\
&\leq&
4\int_0^t
|u^0(s)|^{1/2}\|u^0(s)\|^{1/2}
|\overline{\widetilde{\Gamma}^\e}(s)+\widetilde{K}^\e(s)|^{1/2}\|\overline{\widetilde{\Gamma}^\e}(s)+\widetilde{K}^\e(s)\|^{1/2}
\|\overline{\widetilde{M}^\e}(s)\|
ds\nonumber\\
&\leq&
\frac{1}{2}\nu \int_0^t\|\overline{\widetilde{M}^\e}(s)\|^2ds
+
C\int_0^t
|u^0(s)|\|u^0(s)\|
|\overline{\widetilde{\Gamma}^\e}(s)+\widetilde{K}^\e(s)|\|\overline{\widetilde{\Gamma}^\e}(s)+\widetilde{K}^\e(s)\|
ds\nonumber\\
&\leq&
\frac{1}{4}\nu \int_0^t\|\overline{\widetilde{M}^\e}(s)\|^2ds
+C(\omega^1)\Big[\int_0^T\|\overline{\widetilde{\Gamma}^\e}(s)+\widetilde{K}^\e(s)\|^2ds\Big]^{1/2},
\end{eqnarray}
similar to (\ref{eq I2}),
\begin{eqnarray}\label{eq I4}
|I_4(t)|
\leq
\frac{1}{4}\nu \int_0^t\|\overline{\widetilde{M}^\e}(s)\|^2ds
+C(\omega^1)\Big[\int_0^T\|\overline{\widetilde{\Gamma}^\e}(s)+\widetilde{K}^\e(s)\|^2ds\Big]^{1/2}.
\end{eqnarray}

By (\ref{b6}), (\ref{L4}) and (\ref{eq 03}),
\begin{eqnarray}\label{eq I3}
|I_3(t)|
&=&
2\Big|\int_0^t
\left (
B\Big(\overline{\widetilde{M}^\e}(s),\overline{\widetilde{M}^\e}(s)\Big), u^0(s)
\right )
ds\Big|\nonumber\\
&\leq&
\nu \int_0^t\|\overline{\widetilde{M}^\e}(s)\|^2ds
+
C\int_0^t\|u^0(s)\|^2|\overline{\widetilde{M}^\e}(s)|^2ds.
\end{eqnarray}
Combining (\ref{eq M})--(\ref{eq I3}), we have
\begin{eqnarray*}
& &|\overline{\widetilde{M}^\e}(t)|^2+\Big(1/2-a(\e)\Big)\nu\int_0^t\|\overline{\widetilde{M}^\e}(s)\|^2ds\nonumber\\
&\leq&
a(\e)C(\omega^1)+C(\omega^1)\Big[\int_0^T\|\overline{\widetilde{\Gamma}^\e}(s)+\widetilde{K}^\e(s)\|^2ds\Big]^{1/2}\nonumber\\
& &+
C\int_0^t\|u^0(s)\|^2|\overline{\widetilde{M}^\e}(s)|^2ds.
\end{eqnarray*}
Since $\lim_{\e\to 0}a(\e)=0$ and
$$
\lim_{\e\to 0}\Big[\int_0^T\|\overline{\widetilde{\Gamma}^\e}(s)+\widetilde{K}^\e(s)\|^2ds\Big]=0,\ \ \mathbb{P}^1-a.s.,
$$
by Gronwall's lemma we obtain
$$
\lim_{\e\to 0}\Big[\sup_{t\in[0,T]}|\overline{\widetilde{M}^\e}(t)|^2+\int_0^T\|\overline{\widetilde{M}^\e}(s)\|^2ds\Big]=0,\ \ \mathbb{P}^1-a.s.
$$
The proof is complete.
\end{proof}

\def\refname{ References}

\end{document}